\newtheorem{thm}{Theorem}[section]
\newtheorem{lem}[thm]{Lemma}
\newtheorem{prop}[thm]{Proposition}
\newtheorem{cor}[thm]{Corollary}
\def\be#1\ee{\begin{equation}#1\end{equation}}
\newcommand{\bea}{\begin{eqnarray}}
\newcommand{\eea}{\end{eqnarray}}
\newcommand{\beaa}{\begin{eqnarray*}}
\newcommand{\eeaa}{\end{eqnarray*}}
\newcommand{\bei}{\begin{itemize}}
\newcommand{\eei}{\end{itemize}}
\newcommand{\bee}{\begin{enumerate}}
\newcommand{\eee}{\end{enumerate}}
\def\vp{\varphi}
\def\norm#1{\left\|#1\right\|}             %norm
\def\abs#1{\left\vert #1 \right\vert}      %absolute value
\def\set#1{\left\{#1\right\}}
\def\P{{\mathbb{P}}}
\def\pr#1{\P\left(#1\right)}
\def\R{\mathbb{R}}
\def\E{\mathbb{E}}
\def\Z{{\mathbb Z}}
\def\N{{\mathbb N}}
\def\d{\, \mathrm{d}}
\def\al{\alpha}
\newcommand{\eps}{\varepsilon}
\def\H{{{\scriptstyle H}}}
\def\on{{\mathbf 1}}
\def\R{{\mathbb{R}}}
\def\s{\sigma}
\def\vp{\varphi}
\def\ro{\mathbf 0}
\def\pq{\preceq}
\def\pc{\prec}
\def\qp{\succeq}
\newenvironment{proof}[1][] {\noindent {\bf Proof#1:} }{\hspace*{\fill}$\square$\medskip\par}
\begin{document}

\title{Random Gaussian sums on trees}

\author{Mikhail Lifshits \and Werner Linde}
%\address{St.~Petersburg State University\\ Dept Math. Mech. \\ 198504 Stary Peterhof \\
%Bibliotechnaya pl. 2, Russia} \email{lifts@mail.rcom.ru}

%\author[W. Linde]{Werner Linde}
%\address{Friedrich--Schiller--Universit\"at Jena\\ Institut f\"ur Stochastik\\
%Ernst--Abbe--Platz 2\\ 07743 Jena, Germany}
%\email{werner.linde@uni-jena.de}

\date{\today}

\maketitle
\begin{abstract}
Let $T$ be a tree with induced partial order $\pq$. We investigate centered
Gaussian processes $X=(X_t)_{t\in T}$ represented as
$$
   X_t=\s(t)\sum_{v\pq t}\al(v)\xi_v
$$
for given weight functions $\al$ and $\s$ on $T$ and with $(\xi_v)_{v\in T}$ i.i.d.~standard
normal. In a first part we treat general trees and weights and derive
necessary and sufficient conditions for the a.s.~boundedness of $X$
in terms of compactness properties of $(T,d)$. Here $d$ is a special metric defined via
$\al$ and $\s$, which, in general, is not comparable with the Dudley metric generated by $X$.
In a second part we investigate the boundedness of $X$ for the binary tree
and for homogeneous weights. Assuming some mild regularity assumptions about $\al$ we completely
characterize
 weights $\al$ and $\s$ with $X$ being a.s.~bounded.
\end{abstract}
\bigskip
\bigskip
\bigskip
\bigskip
\bigskip

\vfill
\noindent
\textbf{ 2000 AMS Mathematics Subject Classification:}
Primary: 60G15; Secondary:  06A06, 05C05

\noindent
\textbf{Key words and phrases:}\
Gaussian processes, processes indexed by trees, bounded processes,
summation on trees, metric entropy
\newpage

\section{Introduction}
Let $T$ be a finite or infinite tree with root $\ro$ and let $"\pq"$ be the induced
partial order generated by the structure of $T$,
i.e., it holds $t\pq s$ or, equivalently $s\qp t$, whenever $t$ is situated on the branch connecting
$\ro$ with $s$. Suppose we are given two weight functions $\al$ and $\s$ mapping
$T$ into $[0,\infty)$ with $\s$ non--increasing, that is, $\s(t)\ge \s(s)$ whenever $t\pq s$. If
$(\xi_v)_{v\in T}$ denotes a family of independent standard normal random variables, then the
centered Gaussian process $X=(X_t)_{t\in T}$ with
\be
\label{defX}
X_t :=\s(t)\sum_{v\pq t}\al(v)\xi_v\,,\quad t\in T\,,
\ee
is well defined. Its covariance function $R_X$ is given by
\[
     R_X(t,s)=\s(t)\s(s)\sum_{v\pq t \wedge s}\al(v)^2\,,\quad t,s\in T\,.
\]
Fernique was probably the first to consider such summation schemes
on trees in his constructions of majorizing measures \cite{FerLN}.  More recently, they were extensively studied
and applied in relation to various topics, see e.g.~the literature on Derrida random energy model
\cite{BoK} or displacements in random branching walks \cite{Pem}, to mention just a few. Moreover, summation operators
related to those processes have been recently investigated in \cite{Lif10}, \cite{LifLin10} and in \cite{LL10a}.
Some of the ideas used there turned out to be useful as well for the study of Gaussian
summation schemes on trees.
\medskip

The basic question investigated in this paper is as follows: Given a tree $T$ characterize weights $\al$ and $\s$
such that $X$ is a.s.~bounded, i.e., that
\be
\label{bounded}
     \pr{\sup_{t\in T}|X_t|<\infty}=1\;.
\ee

In a first part we give necessary and sufficient conditions for the weights in order that
(\ref{bounded}) holds. These results are valid for arbitrary trees and they are based on covering
properties of $T$ by $\eps$--balls with respect to a certain metric $d$ first introduced in \cite{LifLin10}.
It is defined by
\be
\label{defd}
d(t,s):=\max_{t\pc r\pq s}\s(r)\left(\sum_{t\pc v \pq r}\al(v)^2\right)^{1/2}\,,
\ee
whenever $t\pq s$ and
we let $d(t,s):=d(t\wedge s,t)+d(t\wedge s,s)$, if $t$ and $s$ are not comparable. Here, as usual, $t\wedge s$
denotes the infimum of $t$ and $s$ in the induced partial order on $T$.
Define the covering numbers of $T$ by
$$
%\label{covnumb}
N(T,d,\eps) :=\inf\set{n\ge 1 : T=\bigcup_{j=1}^n B_\eps(t_j)}
$$
where $B_\eps(t_j)$ are open $\eps$--balls (w.r.t.~the metric $d$) in $T$. Then the main result of the first part is as follows:
\begin{thm}
\label{DS}
Suppose that $X$ is defined by $(\ref{defX})$ with weights $\al$ and $\s$ where $\s$ is
non--increasing. Let $d$ be the metric on $T$ given by $(\ref{defd})$. If
\be
\label{Dud}
\int_0^\infty\sqrt{\log N(T,d,\eps)}\d\eps<\infty\;,
\ee
then $X$ is a.s.~bounded.
Conversely, if  $X$ is a.s.~bounded, then necessarily
\be
\label{Sud}
\sup_{\eps>0}\eps\,\sqrt{\log N(T,d,\eps)}<\infty\;.
\ee
\end{thm}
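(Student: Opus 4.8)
The plan is to recognize Theorem~\ref{DS} as an instance of the general theory of Gaussian processes controlled by metric entropy, and the natural route is to compare the metric $d$ with the intrinsic (Dudley) metric $d_X$ generated by $X$ itself, defined by $d_X(t,s)^2=\E|X_t-X_s|^2$. The classical Dudley upper bound states that finiteness of the entropy integral $\int_0^\infty\sqrt{\log N(T,d_X,\eps)}\d\eps$ implies a.s.~boundedness, while the Sudakov minoration gives $\sup_{\eps>0}\eps\sqrt{\log N(T,d_X,\eps)}<\infty$ as a necessary condition. So if $d$ and $d_X$ were equivalent, both halves would follow immediately. However, the abstract explicitly warns that $d$ is \emph{not} in general comparable with the Dudley metric, so this direct comparison cannot work, and identifying the correct one-sided relationship between $d$ and $d_X$ is exactly where the argument must be delicate.

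First I would compute $d_X$ explicitly. For comparable points $t\pc s$ one gets
\be
d_X(t,s)^2=\E\Big|\s(s)\sum_{v\pq s}\al(v)\xi_v-\s(t)\sum_{v\pq t}\al(v)\xi_v\Big|^2,
\ee
which expands into a sum over $v\pq s$ with coefficients built from $\s(s)$ and $\s(t)$, the cross terms vanishing by independence. The point of the definition \eqref{defd} is that $d(t,s)$ takes a \emph{maximum} over intermediate points $r$ with $t\pc r\pq s$ of the quantity $\s(r)(\sum_{t\pc v\pq r}\al(v)^2)^{1/2}$, which is (up to the monotonicity of $\s$) comparable to $d_X(t,r)$ for that intermediate $r$. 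Thus I expect the chain of inequalities $d_X(t,s)\le C\,d(t,s)$ to hold in one direction, giving $N(T,d_X,\eps)\le N(T,d,c\eps)$ and hence the Dudley sufficiency \eqref{Dud}$\Rightarrow$\eqref{bounded}. The maximum in \eqref{defd} is designed precisely so that $d$ dominates $d_X$, because it captures the worst intermediate increment rather than only the endpoint increment.

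For the converse I cannot simply reverse the comparison, since the two metrics differ. Instead the natural strategy is to pass through intermediate points. If $X$ is a.s.~bounded, Sudakov minoration for the intrinsic metric gives $\sup_\eps\eps\sqrt{\log N(T,d_X,\eps)}<\infty$, but I must upgrade this to the same statement for $d$. The key observation is that for the maximizing intermediate point $r=r(t,s)$ realizing \eqref{defd}, one has $d(t,s)=\s(r)(\sum_{t\pc v\pq r}\al(v)^2)^{1/2}\le C\,d_X(t,r)$, i.e.~a $d$-increment between $t$ and $s$ is controlled by a $d_X$-increment between $t$ and the intermediate $r$. Consequently a point well-separated in the $d$ metric produces a point well-separated in $d_X$ relative to a reference, so that a $d$-packing of cardinality $M$ at scale $\eps$ forces a $d_X$-packing of comparable cardinality at a comparable scale; dualizing packing and covering numbers then yields $N(T,d,\eps)\le C\,N(T,d_X,c\eps)$ up to harmless constants, whence \eqref{Sud}.

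The hard part will be this converse comparison: because $d$ is defined via a maximum over the whole interval $[t,s]$ whereas $d_X$ only sees endpoints, separation in $d$ does not directly give separation in $d_X$ between the \emph{same} pair of points, and one must genuinely exploit the tree structure and the monotonicity of $\s$ to relocate the separation onto the intermediate vertices $r$. A clean way to organize this is to reduce to a branch (a totally ordered chain from the root) where $d$ becomes the supremum-type metric $d(t,s)=\max_{t\pc r\pq s}\s(r)(\sum_{t\pc v\pq r}\al(v)^2)^{1/2}$ and to show that on each branch the entropy numbers for $d$ and $d_X$ are comparable, then glue the branches together using the tree's branching structure. I would expect the monotonicity hypothesis on $\s$ to be essential precisely here, ensuring that the maximizing $r$ behaves well and that increments along a branch are genuinely controlled by the Dudley increments at the corresponding intermediate nodes.
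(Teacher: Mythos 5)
Your first half rests on the inequality $d_X(t,s)\le C\,d(t,s)$, and that is exactly what fails: it is not merely ``delicate'', it is false in general. For $t\pq s$ one has
\[
d_X(t,s)^2=\abs{\s(t)-\s(s)}^2\sum_{v\pq t}\al(v)^2+\s(s)^2\sum_{t\pc v\pq s}\al(v)^2 ,
\]
and while the second term is at most $d(t,s)^2$ (take $r=s$ in the maximum), the first term --- the oscillation of $\s$ multiplied by the \emph{total} accumulated variance up to $t$ --- is invisible to $d$. Concretely, on the branch $T=\N_0$ with $\al(0)=\s(0)=1$, $\al(k)=k^{-\nu}$, $\s(k)=k^{-\theta}$ and $\nu>1$, one has $d(k,k+1)=(k+1)^{-\theta-\nu}$, whereas $d_X(k,k+1)\ge \s(k)-\s(k+1)\ge c\,k^{-\theta-1}$, so the ratio $d_X/d$ blows up; the paper records this explicitly (Proposition \ref{ex1}, and the remark before Proposition \ref{ex2}: neither $d\le c\,d_X$ nor $d_X\le c\,d$ can hold). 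This is precisely why the paper never compares $X$ with $d$ directly. Instead it first replaces $\s$ by its dyadic rounding $\hat\s$, and then \emph{localizes}: it introduces $Y_t=\s(t)\sum_{v\pq t,\,v\equiv t}\al(v)\xi_v$, summing only over the levels on which $\hat\s$ is constant. For this localized process the offending term disappears ($d_Y(t,s)=d(t,s)$ when $t\equiv s$), an $\eps$--order net for $d$ becomes, after adjoining the root, an $\eps$--net for $d_Y$ (Proposition \ref{p4}), Dudley's theorem is applied to $Y$ rather than to $X$, and boundedness is transported back through the explicit linear relations (\ref{YtoX}) and (\ref{XtoY}) (Proposition \ref{p2}). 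Your proposal has no substitute for this localization step, and without it the Dudley half collapses.

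Your converse has a gap of the same nature. The pointwise bound $d(t,s)\le d_X(t,r)$ for the maximizing $r$ is correct, but it does not yield ``a $d$-packing forces a $d_X$-packing of comparable size'': the intermediate points $r$ attached to different $d$-separated pairs may collide or cluster in $d_X$, and nothing in your sketch (branch-by-branch reduction plus ``gluing'') controls this. Moreover, the comparison of covering numbers you aim for, $N(T,d,\eps)\le C\,N(T,d_X,c\eps)$, is essentially the content of Theorem \ref{t51}, whose proof in the paper uses the whole machinery (localization and the argument below), so it cannot be assumed as an input. The paper's actual converse avoids $d_X$ entirely: by \cite[Proposition 5.2]{LifLin10} there are $m=N(T,d,2\eps)-1$ \emph{disjoint} order intervals $(t_i,s_i]$ with $d(t_i,s_i)\ge\eps$; choosing $r_i\in(t_i,s_i]$ realizing the maximum, the variables $\eta_i=X_{r_i}-\frac{\s(r_i)}{\s(t_i)}X_{t_i}$ are \emph{independent} centered Gaussians (independence comes from disjointness of the intervals) with $(\E\,\eta_i^2)^{1/2}\ge\eps$; monotonicity of $\s$ gives $\abs{\eta_i}\le 2\sup_{t\in T}\abs{X_t}$, Fernique's theorem upgrades a.s.\ boundedness to $\E\sup_{t\in T}\abs{X_t}<\infty$, and Sudakov's minoration for independent Gaussians gives $c\,\eps\sqrt{\log m}\le 2\,\E\sup_{t\in T}\abs{X_t}$. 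The disjoint-intervals/independence device is the idea missing from your converse, and the relocation of separation you identify as ``the hard part'' is never actually carried out.
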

It is worthwhile to mention that neither (\ref{Dud}) nor (\ref{Sud}) are direct consequences of the well--known
conditions due to R.M. Dudley and V.N. Sudakov (cf.~\cite{Dud} and \cite{Sud}), respectively. The latter
results are based on compactness properties of $(T,d_X)$ with so--called Dudley metric $d_X$ defined by
\be
\label{defdX}
d_X(t,s):=\left(\E\abs{X_t-X_s}^2\right)^{1/2}\,,\quad t,s\in T\;,
\ee
and not on $d$ as introduced in (\ref{defd}).
We shall see below that, in general, the covering numbers w.r.t.~$d$ and to $d_X$ may behave quite
differently. The main advantage of Theorem \ref{DS}
is that in many cases the covering numbers w.r.t.~$d$ are easier to handle than those defined by $d_X$
(cf.~\cite{LifLin10} for concrete estimates of $N(T,d,\eps)$ and also Corollary \ref{c1} below).
\medskip

It is well--known that in general entropy estimates are too rough for deciding whether or not a given
Gaussian process is bounded. Only majorizing measure techniques would work.
But, unfortunately, majorizing measures are difficult to handle and we do not see how their use leads to
a characterization of weights $\al$ and $\s$ for which $X$ is a.s.~bounded.
Therefore, in a second part, we investigate special trees and weights
where a direct  approach is possible. We suppose that $T$ is a binary tree and that the weights are
homogeneous, i.e.~$\al(t)$ and $\s(t)$ only depend
on the order $|t|$ (cf.~(\ref{deforder}) for the definition)
of  $t\in T$. Our results (cf.~Theorems \ref{t61} and \ref{t62} below) imply the following:
\begin{thm}
\label{binhom}
Let $T$ be a binary tree and suppose $\al(t)=\al_{|t|}$ and $\s(t)=\s_{|t|}$ for two
sequences $(\al_k)_{k\ge 0}$ and $(\s_k)_{k\ge 0}$ of positive numbers with $\s_k$ non--increasing.
\bee
\item
If
\be
\label{regular}
\sup_n \sup_{n\le k\le 2n}\frac{\al_k}{\al_n}<\infty\;,
\ee
then $X$ defined by $(\ref{defX})$ is a.s.~bounded if and only if
\be
\label{condsum}
\sup_n \s_n\sum_{k=1}^n \al_k<\infty\;.
\ee
In particular, if the $\al_k$ are non--increasing, then $(\ref{regular}) $ is always satisfied, hence in
that case $X$ is a.s.~bounded if and only if $(\ref{condsum})$ is valid.
\item
If the $\al_k$ are non--decreasing, then $X$ is a.s.~bounded if and only if
$$
\sup_n \s_n \sqrt n \left(\sum_{k=0}^n \al_k^2\right)^{1/2}<\infty\;.
$$
\eee
\end{thm}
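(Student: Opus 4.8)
The plan is to reduce the a.s.\ boundedness of $X$ to a uniform bound on the expected suprema over the finite subtrees. Writing $T_n$ for the tree truncated at level $n$ and $M_n:=\E\sup_{|t|\le n}X_t$, a.s.\ boundedness of a centered Gaussian process is equivalent to $\sup_n M_n<\infty$, so everything reduces to a two-sided estimate of $M_n$ in terms of $(\al_k)$ and $(\s_k)$. The useful picture is that $X$ restricted to level $n$ is, up to the deterministic factor $\s_n$, the position profile of a binary branching random walk whose increment in generation $k$ is centered Gaussian with standard deviation $\al_k$; the two conditions in the statement are the two genuinely different regimes of the maximal displacement of such a walk.

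Identifying these regimes is the conceptual heart of the argument. In the regular case (\ref{regular}) the walk is \emph{ballistic}: no single generation dominates a path, the strong positive correlations along branches suppress the naive union bound, and the maximal displacement grows like the linear speed $\sum_{k=1}^n\al_k$, so that $M_n$ is of order $\sup_{m\le n}\s_m\sum_{k=1}^m\al_k$ and (\ref{condsum}) is exactly the boundedness condition. When instead the $\al_k$ are non-decreasing and may grow fast, the top generations dominate, the $2^n$ leaves decorrelate, and the maximum is governed by the first-moment (Sudakov) scale $\sqrt{\log 2^n}$ times the standard deviation, i.e.\ $\s_n\sqrt n\,\big(\sum_{k=0}^n\al_k^2\big)^{1/2}$, which is the condition in part~(2). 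For constant weights the two scales coincide, which is the consistency check between the parts; by Cauchy--Schwarz one always has $\sum_{k\le n}\al_k\le\sqrt n\,\big(\sum_{k\le n}\al_k^2\big)^{1/2}$, and the point is which side is attained.

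For the lower bounds I would argue directly on the tree rather than through Theorem \ref{DS}. In the regular case, a greedy path construction (at each vertex follow the child with the larger innovation $\xi$), or equivalently a Sudakov--Fernique comparison against an independent model on a suitable antichain, produces $M_n\gtrsim\s_n\sum_{k=1}^n\al_k$. In part~(2) the near-independence of the $2^n$ leaves makes the plain Sudakov bound on the leaf set tight and yields $M_n\gtrsim\s_n\sqrt n\,\big(\sum_{k=0}^n\al_k^2\big)^{1/2}$. In either case, if the corresponding series is unbounded then $\sup_nM_n=\infty$ and $X$ is a.s.\ unbounded by the Gaussian zero--one law.

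The matching upper bounds are where the main difficulty lies, and where the general entropy bound of Theorem \ref{DS} is not sharp enough: in the regular regime the Dudley integral (\ref{Dud}) can diverge (logarithmically, at the critical scale) even when $X$ is bounded, so one cannot simply read off (\ref{condsum}) from covering numbers. I would instead run a multiscale chaining organized along the levels of the tree, estimating the contribution of the increments between consecutive levels with a union bound over the $2^k$ vertices of generation $k$ and then resumming. The role of the regularity hypothesis (\ref{regular}) is precisely to make this level-wise sum geometric, so that it collapses to the single scale $\sup_m\s_m\sum_{k\le m}\al_k$ rather than acquiring an extra factor; the monotonicity of $\al$ in part~(2) plays the analogous role for the $\ell^2$-scale, where the first-moment bound is already tight. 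The hardest point throughout is thus the regular upper bound, namely showing that the correlations beat the union bound and that the refined upper estimate meets the ballistic lower bound at the same scale $\s_n\sum_{k=1}^n\al_k$.
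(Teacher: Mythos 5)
Your high-level plan (reduce to uniform control of expected suprema, distinguish the ballistic and the Sudakov regimes) agrees with the paper's, and your greedy-path lower bound for part (1) is exactly the paper's proof of Theorem \ref{t61}(a): follow the offspring with the larger $\xi$, obtaining i.i.d.\ maxima with positive mean, so that $\E\sup_t X_t\ge C\sup_n\s_n\sum_{k\le n}\al_k$. But the upper bound in the regular case --- which you rightly call the heart of the matter --- is described by a scheme that fails. A union bound over the $2^k$ vertices of a single generation $k$ costs a factor $\sqrt{\log 2^k}\approx\sqrt k$ per generation, so resumming gives $\s_n\sum_{k\le n}\al_k\sqrt k$, which overshoots (\ref{condsum}) by a factor that no regularity hypothesis can remove: for $\al_k=k^{-3/2}$, $\s_k\equiv1$ (a decreasing, hence maximally regular, weight) condition (\ref{condsum}) holds and the process is bounded, yet $\sum_k\al_k\sqrt k=\sum_k k^{-1}=\infty$. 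The paper's proof of Theorem \ref{t61}(b) avoids this by grouping generations into dyadic blocks $B_m=[2^m,2^{m+1})$ and applying the union bound to the block suprema $U_m$ over $J_m=\{t:|t|\in B_m\}$, so that $\E U_m\lesssim 2^{m/2}\bigl(\sum_{k\in B_m}\al_k^2\bigr)^{1/2}$; it is here, and not in taming a level-wise sum, that (\ref{regular}) enters, converting $2^{m/2}\bigl(\sum_{k\in B_m}\al_k^2\bigr)^{1/2}$ into $c\,Q^2\sum_{k\in B_{m-1}}\al_k$, whence the block expectations telescope to $\sum_{k<2^M}\al_k$ and are controlled by (\ref{sumal}). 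Moreover, uniform bounds on expectations of level or block maxima alone do not yield a.s.\ boundedness of the full supremum: the paper additionally needs Gaussian concentration, controlling $\sum_m\s_{2^m}\E(U_m-\E U_m)_+$ by a geometric series in part (1), and summing the tails $\exp(-r^2n/2G_2^2)$ via the variance decay $\E X_t^2\le G_2^2/n$ in part (2). Your sketch is silent on this step in both parts.

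Second, your lower bound for part (2) is mis-justified. The $2^n$ leaves are \emph{not} nearly independent: for constant $\al_k$, sibling leaves are at $d_X$-distance $\sqrt2\,\s_n\al_n$, far below the standard deviation $\s_n\sqrt n$, and the plain Sudakov bound at the leaves' minimal separation gives only $c\,\s_n\sqrt{n}\,\al_n$, short of the target $\s_n n$ by a factor $\sqrt n$. The argument that closes (the paper's Theorem \ref{t62}(a),(c)) chooses an intermediate level $m$, forms $2^m$ genuinely independent increments $Y_t=X_{L(t)}-(\s_n/\s_m)X_t$ along disjoint paths from level $m$ to level $n$, applies the Sudakov bound for independent Gaussians to these, notes $\max_t Y_t\le 2\sup_t|X_t|$, and finally uses the monotonicity of $(\al_k)$ with $m\approx n/2$ to compare $\sum_{k=m}^n\al_k^2$ with $\sum_{k=0}^n\al_k^2$. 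You do mention ``a suitable antichain,'' but only in the part (1) discussion; as written for part (2), the necessity direction does not go through.
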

\medskip

The organization of this paper is a s follows. After a short introduction to trees,  Section \ref{ProofDS} is devoted
to the proof of  Theorem \ref{DS}.
In Section \ref{metrics} we thoroughly investigate the relation between the two metrics $d$ and $d_X$. 
Here the main observation is that $N(T,d,\eps)$ and $N(T,d_X,\eps)$ may behave
quite differently. Nevertheless, in view of Theorem \ref{DS} and the well--known results
due to R.M. Dudley and to V.N. Sudakov, on the logarithmic level the covering numbers of
these two metrics should be of similar order. We investigate this question in Section
\ref{s:more_ent} more thoroughly. In particular, we show that
$\eps^2\log N(T,d,\eps)$ is bounded if and only if $\eps^2\log N(T,d_{X},\eps)$ is so.
In Section \ref{Binary} we treat processes $X$ indexed
by a binary tree and with homogeneous weights. We prove slightly more general results than
stated in Theorem \ref{binhom}. Finally, we give some interesting examples of bounded as well as
unbounded processes indexed by a binary tree.
In particular, these examples show that the boundedness of $X$ may not be described by properties
of the product $\al\,\s$ only.

\section{Trees}
\setcounter{equation}{0}
Let us recall some basic notations related to trees which will be used later on.
In the sequel $T$ always denotes a finite or an infinite
tree. We suppose that $T$ has a unique root which we denote by
$\ro$ and that each element $t\in T$ has a finite number
of offsprings. Thereby we do not exclude that some elements do not possess any offspring, i.e.,
the progeny of some elements may "die out". The tree structure leads in natural way
to a partial order $,\!,\pq "$ by letting $t\pq s$, respectively $s\qp t$,  provided there are
$t=t_0, t_1,\ldots, t_m=s$ in $T$ such that for $1\le j\le m$ the
element $t_j$ is an offspring of $t_{j-1}$.
The strict inequalities have the same meaning with the additional assumption $t\not=s$.
Two elements $t,s\in T$ are said to be comparable provided that either $t\pq s$ or $s\pq t$.

For $t,s\in T$ with $t\pq s$ the order interval $[t,s]$ is defined by
$$
[t,s]:= \set{ v\in T : t\pq v\pq s}
$$
and in a similar way we construct $(t,s]$ or $(t,s)$ .

A subset $B\subseteq T$ is said to be a branch provided that all elements in $B$ are
comparable and, moreover, if $t\pq v\pq s$ with $t,s\in B$, then this implies $v\in B$
as well. Of course, finite branches are of the form $[t,s]$ for suitable $t\pq s$.

For any $s\in T$ its order $|s|\ge 0$ is defined by
\be
\label{deforder}
|s|:=\#\set{t\in T :  t \pc s}\;.
\ee
Let $\rho$ be an arbitrary metric on the tree $T$.
Given $\eps>0$ a set $\mathcal O\subseteq T$ is said to be an $\eps$--order net w.r.t.~$\rho$
provided that for each $s\in T$ there is an $t\in \mathcal O$ with $t\pq s$ and $\rho(t,s)<\eps$. Let
\be
\label{ordercov}
\tilde N(T,\rho,\eps):=\inf\set{\# \{\mathcal O\} : \mathcal O\;\mbox{is
an}\:\eps\mbox{--order net of}\;T}
\ee
be the corresponding order covering numbers. Clearly, we have
$$
N(T,\rho,\eps)\le \tilde N(T,\rho,\eps)\;.
$$
As shown in \cite[Proposition 3.3]{LifLin10},
for the metric $d$ defined by (\ref{defd}) we
also have a reverse estimate.
More precisely, here it always holds
\be
\label{tildeN}
\tilde N(T,d,2\eps)\le N(T,d,\eps)\;.
\ee
\section{Proof of Theorem \ref{DS}}
\setcounter{equation}{0}
\label{ProofDS}
Let $T$ be an arbitrary tree and let $\al$ and $\s$ be weights on $T$ as before. Define $X=(X_t)_{t\in T}$
as in (\ref{defX}).
Of course, whenever (\ref{bounded}) holds, then we necessarily have
\be
\label{cond1}
\sup_{t\in T}\left(\E |X_t|^2\right)^{1/2}=\sup_{t\in T}\s(t)\left(\sum_{v\pq t}\al(v)^2\right)^{1/2}<\infty\;.
\ee
Thus let us assume that (\ref{cond1}) is always satisfied.
\bigskip

In order to prove part one of Theorem \ref{DS} in a first step we replace the
process $X$ by a process $\hat X$ which is easier to handle.

To this end, if $k\in\Z$, define $I_k\subseteq T$ by
\be
\label{defIk}
    I_k:=\set{t\in T : 2^{-k-1}<\s(t)\le 2^{-k}}
\ee
and a new weight $\hat\s$ by
\be \label{defhats}
    \hat\s:=\sum_{k\in \Z} 2^{-k}\on_{I_k}\;.
\ee
Let $\hat X$ be the process defined by $\al$ and $\hat\s$ via (\ref{defX}), i.e., it holds
\be
\label{defhatX}
\hat X_t=\hat\s(t)\sum_{v\pq t}\al(v)\xi_v\;,\quad t\in T\;,
\ee
and let $\hat d$ denote
the distance generated via $\al$ and $\hat \s$ as in (\ref{defd}). Then the following are valid.

\begin{prop}~
\label{p1}
\bee
\item
If $t\pq s$, then it holds
$$
d(t,s)\le \hat d(t,s)\le 2\,d(t,s)\;.
$$
Consequently, it follows
$$
\tilde N(T,d,\eps)\le \tilde N(T,\hat d,\eps)\le \tilde N(T,d,\eps/2)\;,
$$
where $\tilde N(T,d,\eps)$ and $\tilde N(T,\hat d,\eps)$
are the order covering numbers corresponding to the respective metrics.
\item
The process $X$ is a.s.~bounded if and only if $\hat X$ is a.s.~bounded.
\eee
\end{prop}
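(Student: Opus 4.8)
The key observation is that the modified weight $\hat\s$ is, pointwise, within a factor of two of the original $\s$: by definition of $I_k$ in (\ref{defIk}) and of $\hat\s$ in (\ref{defhats}), whenever $t\in I_k$ we have $2^{-k-1}<\s(t)\le 2^{-k}=\hat\s(t)$, so that
\be
\label{sigmabound}
\s(t)\le \hat\s(t)< 2\,\s(t)\quad\mbox{for all }t\in T\,.
\ee
This single inequality will drive both parts. First I would verify that $\hat\s$ is still non--increasing, which is needed so that $\hat d$ is a legitimate metric of the form (\ref{defd}): if $t\pq s$ then $\s(t)\ge\s(s)$, hence $t\in I_k$ and $s\in I_\ell$ with $k\le\ell$, giving $\hat\s(t)=2^{-k}\ge 2^{-\ell}=\hat\s(s)$.

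For part (1), I would write out the defining maximum (\ref{defd}) for both $d(t,s)$ and $\hat d(t,s)$ over $t\pc r\pq s$. The inner square--root factor $\big(\sum_{t\pc v\pq r}\al(v)^2\big)^{1/2}$ is identical for both metrics, since the $\al$ weights are unchanged; only the factor $\s(r)$ versus $\hat\s(r)$ differs. Applying (\ref{sigmabound}) with $t$ replaced by each competing $r$ inside the maximum, and using that $\max_r$ of a product with a uniformly comparable factor stays comparable, yields $d(t,s)\le\hat d(t,s)\le 2\,d(t,s)$ directly. The chain of inequalities for the order covering numbers then follows from a standard monotonicity argument: a metric that is pointwise larger has larger $\eps$--balls requiring more nets, and an $\eps$--order net for $d$ at scale $\eps/2$ serves as an $\eps$--order net for $\hat d$ because $\hat d\le 2d$; I would feed the two--sided bound into the definition (\ref{ordercov}) of $\tilde N$ and read off the sandwich $\tilde N(T,d,\eps)\le\tilde N(T,\hat d,\eps)\le\tilde N(T,d,\eps/2)$.

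For part (2), the point is that $X_t$ and $\hat X_t$ share the same Gaussian partial sums $S_t:=\sum_{v\pq t}\al(v)\xi_v$ and differ only by the deterministic multipliers $\s(t)$ and $\hat\s(t)$; thus $\hat X_t=(\hat\s(t)/\s(t))\,X_t$ pathwise wherever $\s(t)>0$. Because $1\le\hat\s(t)/\s(t)<2$ by (\ref{sigmabound}), we have $|X_t|\le|\hat X_t|\le 2|X_t|$ simultaneously for all $t$ on almost every sample path, so $\sup_t|X_t|<\infty$ if and only if $\sup_t|\hat X_t|<\infty$, which is exactly the equivalence of (\ref{bounded}) for the two processes. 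I expect the only genuine care to be needed at points where $\s(t)=0$ (there $X_t=\hat X_t=0$, so they contribute nothing and can be handled separately) and in confirming that the pathwise two--sided bound transfers to the almost--sure boundedness statement without any measurability subtlety, which it does since the bound holds surely. The main obstacle, such as it is, is purely bookkeeping in part (1): tracking that the maximum over the same index set $\{r:t\pc r\pq s\}$ preserves the factor--of--two comparison, rather than any conceptual difficulty.
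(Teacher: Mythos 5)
Your proposal is correct and follows essentially the same route as the paper: the paper's (very terse) proof likewise rests on the pointwise comparison $\s(t)\le\hat\s(t)< 2\,\s(t)$, which gives $d\le\hat d\le 2d$ and hence the covering-number sandwich for part (1), and the pathwise bound $|X_t|\le|\hat X_t|\le 2|X_t|$ for part (2). One cosmetic slip: a pointwise larger metric has \emph{smaller} $\eps$-balls (hence more are needed), not larger ones, but your subsequent concrete net-transfer argument is the right one and unaffected.
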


\begin{proof}
The first assertion follows easily by the definition of $d$ and $\hat d$ while the second one
is a direct consequence of
$$
|X_t|\le |\hat X_t|\le 2\,|X_t|\,,\quad t\in T\;.
$$
\end{proof}
\medskip

As a consequence of the preceding proposition we conclude that it suffices to prove
Theorem \ref{DS} in the case of non--increasing weights $\s$ of the form
\be
\label{sig}
\s:=\sum_{k\in \Z}2^{-k}\on_{I_k}
\ee
The property that $\s$ is non--decreasing reflects in the following properties of the partition
$(I_k)_{k\in\Z}$ of $T$.
\begin{enumerate}
\item
Whenever $B\subseteq T$ is a branch, then for each $k\in\Z$ either $B\cap I_k=\emptyset$
or it is an order interval in $T$.
\item
If $l< k$, $t\in B\cap I_l$, $s\in B\cap I_k$, then this implies $t\pc s$.
\item
$I_k=\emptyset$ whenever $k\le k_0$ for a certain $k_0\in\Z$.
\end{enumerate}
Thus from now on we may suppose that the weight $\sigma$  is as in (\ref{sig}) with a partition $(I_k)_{k\in\Z}$ of $T$
possessing properties (1), (2) and (3) stated before.
\bigskip

In a second step of the proof of Theorem \ref{DS}, first part, we define a process $Y:=(Y_t)_{t\in T}$
which may be viewed as a localization of $X$. To this end let us write
$t\equiv s$ provided there is a $k\in\Z$
such that  $t,s\in I_k$.
With this notation  we set
\be
\label{defY}
     Y_t:=\s(t)\sum_{v\pq t\atop{v\equiv t}}\al(v)\xi_v\;,\quad t\in T\;.
\ee
It is an easy deal to relate the boundedness of $X$ with that of $Y$.
\begin{prop}
\label{p2}
     The process $Y$ is a.s.~bounded if and only if $X$ is a.s.~bounded.
\end{prop}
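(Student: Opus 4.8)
The plan is to show that $X$ and $Y$ are, up to a constant factor and an additive term that is itself a.s.\ bounded, the same process, so that boundedness of one forces boundedness of the other. The key structural observation is that, for a fixed $t$, the sum defining $X_t$ ranges over the whole branch $[\ro,t]$, whereas $Y_t$ uses only the portion of that branch lying in the single set $I_k$ containing $t$. By property (2) of the partition, the branch from $\ro$ to $t$ passes successively through the sets $I_{k_0},I_{k_0+1},\dots$ in order of increasing index, entering $I_k$ (the class of $t$) last. Thus $X_t$ decomposes as a sum of the ``local'' contribution $Y_t$ plus contributions from the strictly earlier classes along the branch.

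First I would make this decomposition precise. Writing $|t|_\s$ for the index $k$ with $t\in I_k$, introduce for each element $u$ lying on the branch to $t$ its class index, and let $t^{(j)}$ denote the last (maximal) element of $[\ro,t]\cap I_j$ whenever this intersection is nonempty. Then
\be
X_t=\s(t)\sum_{v\pq t}\al(v)\xi_v
   =Y_t+\s(t)\sum_{j< |t|_\s}\;\sum_{v\pq t^{(j)}\atop v\equiv t^{(j)}}\al(v)\xi_v .
\ee
The inner double sum is exactly $\s(t)/\s(t^{(j)})$ times $Y_{t^{(j)}}$, and since $\s(t)\le\s(t^{(j)})=2^{-j}$ for $j<|t|_\s$, the cross terms are controlled by the values of $Y$ at the break points $t^{(j)}$. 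This rewrites the difference $X_t-Y_t$ as a weighted sum of $Y$-values at finitely many ancestors, with geometrically decaying weights $2^{-(|t|_\s-j)}$.

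The main point is then a geometric summation estimate: because the weights $\s(t)/\s(t^{(j)})=2^{-(|t|_\s-j)}$ decay geometrically in the index gap, one gets a bound of the form $|X_t-Y_t|\le \sum_{j<|t|_\s}2^{-(|t|_\s-j)}\sup_s|Y_s|\le \sup_s|Y_s|$, so that $\sup_t|X_t|\le 2\sup_s|Y_s|$, giving one implication. For the reverse implication one notes that $Y_t$ is itself a difference of $X$-values along the branch: $Y_t=\s(t)\sum_{v\pq t,\,v\equiv t}\al(v)\xi_v$ can be expressed through $X_t$ and $X$ at the predecessor where the branch last entered $I_{|t|_\s}$, again with bounded coefficients since all relevant $\s$-values are comparable within one class. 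Hence $\sup_t|Y_t|$ is likewise dominated by a constant times $\sup_s|X_s|$.

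The hard part will be handling the almost-sure and supremum aspects rigorously rather than pointwise in $t$: one must verify that the geometric bound on $|X_t-Y_t|$ holds with a single random constant (namely $\sup_s|Y_s|$) simultaneously for all $t$, and that $\sup_s|Y_s|$ can indeed be pulled out of the sum over $j$ uniformly. This is where the geometric decay of the coefficients is essential, and care is needed because different $t$ involve different collections of break points $t^{(j)}$; but since each such break point is itself an element of $T$ at which $Y$ is defined, the bound $\sup_t|X_t-Y_t|\le\sup_s|Y_s|$ follows at once from the geometric series, and symmetrically in the other direction.
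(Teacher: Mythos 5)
Your proposal is correct and follows essentially the same route as the paper's proof: your decomposition $X_t=Y_t+\sum_{j<|t|_\s}2^{-(|t|_\s-j)}Y_{t^{(j)}}$ is exactly the paper's identity $X_t=\sum_{\ell\le k}2^{-(k-\ell)}Y_{\lambda_\ell(t)}$, and your reverse step expressing $Y_t$ as $X_t$ minus a bounded multiple of $X$ at the last predecessor outside the class of $t$ is the paper's relation $Y_t=X_t-\frac{\s(t)}{\s(\lambda^-(t))}X_{\lambda^-(t)}$ with coefficient at most $1$ since $\s$ is non--increasing. The ``hard part'' you flag at the end is not actually an issue: the pointwise bounds hold for every $t$ with the single random quantity $\sup_s|Y_s|$ (respectively $\sup_s|X_s|$) on the right, so taking the supremum over $t$ is immediate, just as you conclude.
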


\begin{proof}
Actually, we establish simple linear relations between $Y$ and $X$, see (\ref{YtoX})
and (\ref{XtoY}) below.
For any integers $\ell\le k$ and any $t\in I_k$ set $B_\ell(t):=[\ro,t]\cap I_\ell$.
Then we have
\begin{eqnarray}
 \nonumber
  X_t &=& 2^{-k} \sum_{\ell\le k} \sum_{v\in B_\ell(t)} \al(v)\xi_v
  \\  \nonumber
  &=&  \sum_{\ell\le k}  2^{-(k-\ell)} \cdot 2^{-\ell} \sum_{v\in B_\ell(t)} \al(v)\xi_v
  \\ \label{YtoX}
  &=& \sum_{\ell} \ 2^{-(k-\ell)} Y_{\lambda_\ell(t)},
\end{eqnarray}
where the last sum is taken over $\ell\le k$ such that $B_\ell(t)\not=\emptyset$
and $\lambda_\ell(t):=\max\{s:s\in B_\ell(t)\}$.
It follows from (\ref{YtoX}) that the boundedness of $Y$ yields that of $X$.

To prove the converse statement of Proposition \ref{p2},
take an arbitrary $t\in T$ and consider two different cases.\\
If $t\equiv \ro$ (recall that $\ro$ denotes the
root of $T$), then by the definition of $Y$ we simply have
$Y_t=X_t$.

Otherwise, if $t\not\equiv\ro$, let
\[
  \lambda^-(t)=\max\{s: s\pq t, s\not\equiv t\}.
\]
By the definition of $Y_t$ we obtain
\begin{eqnarray} \nonumber
Y_t &=& \s(t)\sum_{\lambda^-(t) \prec v \pq t} \al(v)\xi_v
\\ \nonumber
&=& \s(t) \left( \sum_{ v \pq t} \al(v)\xi_v
               - \sum_{v\pq \lambda^-(t)} \al(v)\xi_v\right)
\\ \label{XtoY}
 &=& X_t -\frac{\s(t)}{\s(\lambda^-(t))}\ {X_{\lambda^-(t)}}.
\end{eqnarray}
Since the weight $\s$ is non--increasing, by $\lambda^-(t)\pq t$ we get
$\frac{\s(t)}{\s(\lambda^-(t))}\le 1$.
It follows from (\ref{XtoY}) that if $X$ is a.s.~bounded this is also valid for $Y$ as claimed.
This completes
the proof.
\end{proof}
\medskip

In the next step we calculate the Dudley distance generated by $Y$ and compare $N(T,d_Y,\eps)$
with $\tilde N(T,d,\eps)$. Recall that $ \tilde N(T,d_Y,\eps)$ and $\tilde N(T,d,\eps)$ are the
corresponding order covering numbers as introduced in $(\ref{ordercov})$.

\begin{prop}
\label{p4}
Suppose $\al(\ro)=0$, hence $Y_\ro=0$ a.s. Then it follows that
\be
\label{entrest}
N(T,d_Y,\eps)\le \tilde N(T,d_Y,\eps)\le\tilde N(T,d,\eps)+1\;.
\ee
\end{prop}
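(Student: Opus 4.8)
The left inequality $N(T,d_Y,\eps)\le\tilde N(T,d_Y,\eps)$ is just the general relation $N(T,\rho,\eps)\le\tilde N(T,\rho,\eps)$ between covering and order covering numbers recorded earlier, valid for any metric $\rho$. Hence the whole content lies in the right inequality $\tilde N(T,d_Y,\eps)\le\tilde N(T,d,\eps)+1$, and the plan is to produce a $d_Y$-order net out of an optimal $d$-order net by adjoining the single point $\ro$.

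First I would compute $d_Y$ on comparable pairs $t\pq s$ in terms of the partition $(I_k)$. If $t\equiv s$, then by properties (1) and (2) of the partition the whole interval $[t,s]$ lies in one class, $\s$ is constant there, and the localization gives $d_Y(t,s)=\s(s)\bigl(\sum_{t\pc v\pq s}\al(v)^2\bigr)^{1/2}=d(t,s)$; thus $d_Y$ and $d$ coincide on comparable equivalent pairs. If $t\not\equiv s$, then $Y_t$ and $Y_s$ are built from disjoint families of the $\xi_v$, hence independent, so $d_Y(t,s)^2=\E Y_t^2+\E Y_s^2$. I would also record the trivial monotonicity $d(t,r)\le d(t,s)$ for $t\pq r\pq s$, immediate from the max in $(\ref{defd})$.

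Next take a $d$-order net $\mathcal O$ with $|\mathcal O|=\tilde N(T,d,\eps)$ and claim that $\mathcal O\cup\{\ro\}$ is an $\eps$-order net for $d_Y$. Fix $s\in I_k$ and a net point $t\in\mathcal O$ with $t\pq s$, $d(t,s)<\eps$. If $t\equiv s$, the identity above gives $d_Y(t,s)=d(t,s)<\eps$, so $t$ covers $s$. If $t\not\equiv s$, let $a(s)$ be the entry point of $s$ into its class, i.e.\ the minimal element of $[\ro,s]\cap I_k$; since $t\not\equiv s$ one has $t\pc a(s)$, whence $\{v:a(s)\pq v\pq s\}\subseteq\{v:t\pc v\pq s\}$ and therefore
\[
\E Y_s^2=\s(s)^2\sum_{a(s)\pq v\pq s}\al(v)^2\le\s(s)^2\sum_{t\pc v\pq s}\al(v)^2\le d(t,s)^2<\eps^2 .
\]

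The decisive observation is that such an $s$ is covered by the root. Since $\s$ is non-increasing, the class index is non-decreasing along each branch from $\ro$, so any $s$ lying in the root's class has all of $[\ro,s]$ in that class and hence falls into the first case. Consequently, in the second case $\ro\not\equiv s$, and by independence together with $Y_\ro=0$ we get $d_Y(\ro,s)^2=\E Y_\ro^2+\E Y_s^2=\E Y_s^2<\eps^2$; as $\ro\pq s$ always, $\ro$ covers every second-case element. Thus $\mathcal O\cup\{\ro\}$ is a $d_Y$-order net of cardinality at most $\tilde N(T,d,\eps)+1$. The only genuine idea is this last one: a priori one would fear that replacing the "wrong-class" net points forces adding many class entry points, but the bound $\E Y_s^2\le d(t,s)^2<\eps^2$ shows that every such $s$ has small variance and is therefore caught by the single vanishing point $Y_\ro=0$, so one extra point suffices. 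Checking the two $d_Y$-formulas from properties (1) and (2) is routine; recognizing that $+1$ is enough is the main point.
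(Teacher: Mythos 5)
Your proof is correct and follows essentially the same route as the paper's: compute $d_Y$ on comparable pairs via the partition (coinciding with $d$ within a class, splitting by independence across classes), then show that adjoining the root $\ro$ to an optimal $d$--order net yields an $\eps$--order net for $d_Y$, using $\E Y_s^2\le d(t,s)^2<\eps^2$ in the mixed-class case. The only cosmetic difference is that the paper exploits $Y_\ro=0$ directly to write $d_Y(\ro,s)=\left(\E|Y_s|^2\right)^{1/2}$ without needing your detour through $\ro\not\equiv s$ and the independence formula.
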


\begin{proof}
If $t\pq s$, then we get
$$
d_Y(t,s)^2= \s(s)^2\sum_{t\pc v\pq s}\al(v)^2 = d(t,s)^2    \qquad\mbox{if}\quad t\equiv s
$$
and
$$
   d_Y(t,s)^2=\E|Y_t|^2+\E|Y_s|^2\quad\mbox{if}\quad t\not\equiv s\;.
$$
Given $\eps>0$ let $\mathcal O\subseteq T$
be an $\eps$--order net w.r.t.~the metric $d$. Take $s\in T$ arbitrarily. Then there is
a $t\in \mathcal O$ such that $t\pq s$ and $d(t,s)<\eps$. If $t\equiv s$, then this implies
$d_Y(t,s)=d(t,s)<\eps$ as well. But if $t\not\equiv s$,
then we get
\beaa
 d_Y(\ro,s)&=& \left(\E|Y_s-Y_\ro|^2\right)^{1/2}
            = \left(\E|Y_s|^2\right)^{1/2}
            =\s(s) \left(\sum_{v\pq s\atop{v\equiv t}}\al(v)^2\right)^{1/2}
 \\
            &\le& \s(s)\left(\sum_{t\pc v\pq s}\al(v)^2\right)^{1/2}
            \le d(t,s)
            <\eps\,.
\eeaa
In different words, the set $\mathcal O\cup\{\ro\}$ is an $\eps$--order net of $T$ w.r.t.~$d_Y$.
Of course, this implies the second inequality in (\ref{entrest}),
the first one being trivial.
Thus the proof is complete.
\end{proof}

\noindent
\textbf{Proof of Theorem \ref{DS}, first part}:
Without loosing generality we may assume $\al(\ro)=0$. Indeed,
write
$$
    X_t=\s(t)\sum_{v\pq t}\al(v)\xi_v=\s(t)\sum_{\ro\pc v\pq t}\al(v)\xi_v+ \s(t)\al(\ro)\xi_\ro
$$
and observe that $\sup_{t\in T}\s(t)<\infty$. Moreover, the metric $d$ is independent of $\al(\ro)$.
Note that this number never appears in the evaluation of $d(t,s)$ for arbitrary $t,s\in T$.

Thus let us assume now that (\ref{Dud}) is valid. Then (\ref{tildeN})
implies $$\int_0^\infty \sqrt{\log \tilde N(T,d,\eps)}\d\eps<\infty$$ as well. Hence
Proposition \ref{p4} yields
$$
\int_0^\infty \sqrt{\log N(T,d_Y,\eps)}\d\eps<\infty\;.
$$
Consequently,
Dudley's theorem (cf.~\cite{Dud} or \cite{Lif}, p.179) applies for $Y$ and $d_Y$, hence
$Y$ possesses a.s.~bounded paths. In view of Proposition \ref{p2}
the paths of $X$ are also a.s.~bounded and this completes the proof of the first part
of Theorem \ref{DS}.
\medskip\par

\noindent
\textbf{Proof of Theorem \ref{DS}, second part}:
Take $\eps>0$. As proved in \cite[Proposition 5.2]{LifLin10}  there are at least
$m:=N(T,d,2\eps)-1$ disjoint order intervals $(t_i,s_i]$ in $T$ with
$d(t_i,s_i)\ge \eps$. By the definition of $d$ we find $t_i\pc r_i\pq s_i$ such that
$$
%\label{lower}
\s(r_i)\left(\sum_{t_i\pc v\pq r_i}\al(v)^2\right)^{1/2}\ge \eps\,,\quad 1\le i\le m\;.
$$
Next, set
\be\label{defeta}
\eta_i:= X_{r_i}-\frac{\s(r_i)}{\s(t_i)}\,X_{t_i}\,,\quad 1\le i\le m\;.
\ee
Then it follows that
$$
\eta_i=\s(r_i)\left[\sum_{v\pq r_i}\al(v)\xi_v -\sum_{v\pq t_i}\al(v)\xi_v\right]
=\s(r_i)\left[\sum_{t_i\pc v\pq r_i}\al(v)\xi_v\right]
$$
and, consequently, the $\eta_i$ are independent centered Gaussian with
\be \label{esteta0}
 (\E\abs{\eta_i}^2)^{1/2}=\s(r_i)\left(\sum_{t_i\pc v\pq r_i}\al(v)^2\right)^{1/2}\ge \eps\;.
\ee
Since $\s$ is assumed to be non--increasing, we get
\be
\label{esteta}
  \sup_{1\le i\le m}\abs{\eta_i}\le 2\sup_{t\in T}\abs{X_t}\;.
\ee
Suppose now that $X$ is a.s.~bounded. By Fernique's theorem (cf.~\cite{Fer} or~\cite{Lif}, p.142)
this implies
$$
  C:=\E \sup_{t\in T}|X_t|<\infty\;,
$$ hence (\ref{esteta}) leads to
$$
\E\sup_{1\le i\le m}\abs{\eta_i}\le 2\, C\;,
$$
and by the choice of $m$ the assertion follows by
$$
c\,\eps\,\sqrt{\log m}\le\E\sup_{1\le i\le m}\abs{\eta_i}
$$
where we used (\ref{esteta0}) and the classical Fernique--Sudakov bound recalled below
in (\ref{sudakov}).
\hspace*{\fill}$\square$\medskip\par
\medskip

The main advantage of Theorem \ref{DS} is that there are quite general techniques to get
precise estimates for $N(T,d,\eps)$ (cf.~\cite{LifLin10}). For example, Theorem \ref{DS} implies the following.
\begin{cor}
\label{c1}
Let $T$ be a binary tree and suppose that
$$
\al(t)\s(t)\le c\,|t|^{-\gamma}\,,\quad t\in T\,,
$$
for some $\gamma>1$.  Then
$X$ defined by $(\ref{defX})$ is a.s.~bounded.
%$$
%\pr{\sup_{t\in T}\abs{\s(t)\sum_{v\pq t}\al(v)\xi_v}<\infty}=1\;.
%$$
Conversely, if
$$
\al(t)\ge c\,|t|^{-\gamma}
$$
for some $\gamma<1$ and $\s(t)\equiv 1$, then the generated process $X$ is a.s.~unbounded.
%$$
%\sup_{t\in T}\abs{\sum_{v\pq t}\al(v)\,\xi_v}=\infty\;.
%$$
\end{cor}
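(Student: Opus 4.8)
The plan is to reduce both assertions to Theorem~\ref{DS} by estimating the covering numbers $N(T,d,\eps)$ of the binary tree for the metric $d$ from~(\ref{defd}): for the first statement I would bound $N(T,d,\eps)$ \emph{from above} and check the Dudley integral~(\ref{Dud}), while for the converse I would bound $N(T,d,\eps)$ \emph{from below} and show that the Sudakov bound~(\ref{Sud}) is violated. The product form $\al\,\s$ of the hypotheses is exactly what lets one read off these covering numbers from the order $|t|$.

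\emph{(Sufficiency.)} The key computation uses that $\s$ is non--increasing: for $t\pc v\pq r$ one has $\s(r)\le\s(v)$, so $\s(r)^2\al(v)^2\le(\s(v)\al(v))^2\le c^2|v|^{-2\gamma}$. Inserting this into~(\ref{defd}) shows that every descendant $s\qp t$ of a vertex $t$ with $|t|=n$ satisfies $d(t,s)^2\le c^2\sum_{l>n}l^{-2\gamma}=:\delta_n^2$, and since $\gamma>1$ we get $\delta_n\le C\,n^{1/2-\gamma}$. Hence the set $\{t:|t|\le n\}$, of cardinality at most $2^{n+1}$, is a $2\delta_n$--order net, i.e. $\tilde N(T,d,2\delta_n)\le 2^{n+1}$; using $N(T,d,\eps)\le\tilde N(T,d,\eps)$ and inverting $\eps\sim\delta_n\sim n^{1/2-\gamma}$ yields $\log N(T,d,\eps)\lesssim\eps^{-2/(2\gamma-1)}$. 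As $\gamma>1$ the exponent $2/(2\gamma-1)$ is $<2$, so $\sqrt{\log N(T,d,\eps)}\lesssim\eps^{-1/(2\gamma-1)}$ is integrable at $0$; thus~(\ref{Dud}) holds and Theorem~\ref{DS} gives a.s.\ boundedness.

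\emph{(Unboundedness.)} Now $\s\equiv1$, so $d(t,s)^2=\sum_{t\pc v\pq s}\al(v)^2$ for $t\pq s$. If $\gamma\le1/2$ then $\sum_l l^{-2\gamma}=\infty$, so $\E|X_t|^2=\sum_{v\pq t}\al(v)^2$ is unbounded along any branch and even the necessary condition~(\ref{cond1}) fails; hence $X$ is unbounded and I may assume $1/2<\gamma<1$. In this range I would build a large separated set: fix $m$, and below each of the $2^m$ vertices of order $m$ choose one descendant of some large order $n$. Two such vertices $s_i\ne s_j$ satisfy $s_i\wedge s_j$ at order $<m$, so $d(s_i,s_j)^2\ge c^2\sum_{l=m}^{n}l^{-2\gamma}\approx c'^{\,2}m^{1-2\gamma}$ for $n$ large. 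A set of $2^m$ points pairwise separated by $\rho_m\approx c'm^{1/2-\gamma}$ cannot be covered by fewer than $2^m$ balls of radius $\rho_m/2$, so $N(T,d,\rho_m/2)\ge 2^m$ and therefore $\tfrac{\rho_m}{2}\sqrt{\log N(T,d,\rho_m/2)}\gtrsim m^{1/2-\gamma}\sqrt m=m^{1-\gamma}\to\infty$. Thus~(\ref{Sud}) fails, and the contrapositive of the second part of Theorem~\ref{DS}, together with the Gaussian zero--one law, shows $X$ is a.s.\ unbounded.

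The reductions of $d$ to the one--parameter quantities $\delta_n$ and $\rho_m$ are routine once the monotonicity of $\s$ is exploited. The genuinely delicate step is the lower bound in the converse: I must simultaneously control the \emph{number} $2^m$ of separated points and the \emph{separation scale} $\rho_m\sim m^{1/2-\gamma}$ so that the product $\rho_m\sqrt{\log 2^m}\sim m^{1-\gamma}$ diverges (this is precisely where $\gamma<1$ enters, and where cruder choices would lose the blow--up). Keeping the split at $\gamma=1/2$ clean --- the ``infinite diameter / (\ref{cond1}) fails'' regime versus the genuine entropy regime $1/2<\gamma<1$ --- and verifying that the exponents in both halves hinge exactly on the borderline $\gamma=1$ is where the care lies.
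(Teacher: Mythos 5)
Your proof is correct and follows essentially the same route as the paper: both parts are reduced to Theorem \ref{DS} via the two-sided entropy estimates $\log N(T,d,\eps)\lesssim \eps^{-2/(2\gamma-1)}$ (from $\al(t)\s(t)\le c|t|^{-\gamma}$) and $\log N(T,d,\eps)\gtrsim \eps^{-2/(2\gamma-1)}$ (from $\al(t)\ge c|t|^{-\gamma}$, $\s\equiv 1$), with the borderline $\gamma=1$ separating the Dudley regime from the failure of the Sudakov bound. The only difference is that the paper imports these entropy bounds from \cite{LifLin10} (its Propositions 6.3 and 7.7), whereas you prove them directly --- your net $\{t:|t|\le n\}$ exploiting the monotonicity of $\s$, and your packing of $2^m$ descendants of the order-$m$ vertices, are both correct --- and you additionally treat the regime $\gamma\le 1/2$ explicitly via failure of (\ref{cond1}) and invoke the Gaussian zero--one law to upgrade ``not a.s.\ bounded'' to ``a.s.\ unbounded,'' two points the paper leaves implicit.
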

\begin{proof}
As shown in \cite{LifLin10}, an estimate
$
\al(t)\s(t)\le c\,|t|^{-\gamma}$ implies
$\log N(T,d,\eps)\le c\, \eps^{-2/(2\gamma-1)}$
for each $\gamma>1/2$.
 Hence, if $\gamma>1$, then (\ref{Dud}) holds, hence Theorem \ref{DS} applies and completes the
proof of the first part.

The second part follows by $\log N(T,d,\eps)\ge c\, \eps^{-2/(2\gamma-1)}$
whenever $\al(t)\ge c\,|t|^{-\gamma}$ for some $\gamma>1/2$ and $\s(t)\equiv 1$
(cf.~\cite[Proposition 7.7]{LifLin10}). Thus, by Theorem \ref{DS} the
process $X$ cannot be bounded if $\gamma<1$.
\end{proof}

\begin{remark}
\rm
The second part of Corollary \ref{c1} does no longer hold for non--constant weights $\s$. In different words,
an estimate $\al(t)\s(t)\ge c\,|t|^{-\gamma}$  with $1/2<\gamma<1$ does not always imply that $X$ is unbounded (cf.~the
remark after Corollary \ref{c2} below).
\end{remark}
\medskip

Corollary 3.4 suggests that the boundedness of a process with weights
$\al(t)$ and $\s(t)$ might be determined  by the product $\sigma(t)\alpha(t)$.
In other words, it is natural to ask what is the relation between the boundedness
of this process and the process generated by the weights $\widetilde{\sigma}(t):\equiv 1$
and $\widetilde{\alpha}(t):=\sigma(t)\alpha(t)$. It turns out that (only) a one--sided
implication is valid.
\medskip

To investigate this question, for a moment write $X^{\al,\s}$ for the process defined in (\ref{defX}).
\begin{prop} \label{al_s_1_als}
If $X^{\al\s,1}$ is a.s.~bounded, then this is also true
for $X^{\al,\s}$.
\end{prop}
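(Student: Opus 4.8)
The plan is to bypass any comparison of Gaussian processes through their metrics and instead establish the purely pathwise (deterministic) inequality
$$
\sup_{t\in T}|X^{\al,\s}_t|\le 2\,\sup_{t\in T}|X^{\al\s,1}_t|,
$$
valid for every realization of $(\xi_v)_{v\in T}$; the assertion of the proposition then follows immediately. It is worth recording first why the tempting shortcut fails. For $t\pq s$ one computes
$$
\E|X^{\al,\s}_t-X^{\al,\s}_s|^2=(\s(t)-\s(s))^2\sum_{v\pq t}\al(v)^2+\s(s)^2\sum_{t\pc v\pq s}\al(v)^2,
$$
and while the second summand is dominated by the increment variance $\sum_{t\pc v\pq s}\s(v)^2\al(v)^2$ of $X^{\al\s,1}$ (because $\s$ is non--increasing, so $\s(s)\le\s(v)$ for $v\pq s$), the first summand carries the whole accumulated mass $\sum_{v\pq t}\al(v)^2$ and is not controlled by $X^{\al\s,1}$. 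Hence a Sudakov--Fernique type comparison of Dudley metrics is unavailable, and one must argue directly on the sample paths.

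First I would fix $t\in T$ with $\s(t)>0$ (if $\s(t)=0$ then $X^{\al,\s}_t=0$ and there is nothing to prove) and enumerate the branch joining the root to $t$ as $\ro=v_0\pc v_1\pc\cdots\pc v_n=t$. Writing $S_t:=\sum_{v\pq t}\al(v)\xi_v$, so that $X^{\al,\s}_t=\s(t)S_t$, and setting $b_j:=X^{\al\s,1}_{v_j}=\sum_{i\le j}\s(v_i)\al(v_i)\xi_{v_i}$ (with $b_{-1}:=0$), one has $\al(v_j)\xi_{v_j}=(b_j-b_{j-1})/\s(v_j)$; note that $\s(v_j)\ge\s(t)>0$ for every $j$ since $\s$ is non--increasing along the branch, so these divisions are legitimate. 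Summation by parts then yields the identity
$$
\s(t)S_t=b_n-\s(v_n)\sum_{j=0}^{n-1}\Big(\frac{1}{\s(v_{j+1})}-\frac{1}{\s(v_j)}\Big)b_j .
$$

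The decisive point is that, once more by the monotonicity of $\s$, each coefficient $\frac{1}{\s(v_{j+1})}-\frac{1}{\s(v_j)}$ is non--negative, and the corresponding weighted sum telescopes to
$$
\s(v_n)\sum_{j=0}^{n-1}\Big(\frac{1}{\s(v_{j+1})}-\frac{1}{\s(v_j)}\Big)=1-\frac{\s(t)}{\s(\ro)}\le 1 .
$$
Thus $X^{\al,\s}_t$ is expressed as a signed combination of the values $b_j=X^{\al\s,1}_{v_j}$ with $v_j\pq t$, carrying coefficient $1$ on $b_n$ and non--positive coefficients of total absolute value at most $1$ on the remaining terms. Consequently
$$
|X^{\al,\s}_t|\le |b_n|+\s(v_n)\sum_{j=0}^{n-1}\Big(\frac{1}{\s(v_{j+1})}-\frac{1}{\s(v_j)}\Big)|b_j|\le 2\sup_{v\in T}|X^{\al\s,1}_v|,
$$
and taking the supremum over $t$ gives the claimed deterministic bound; in particular a.s.~boundedness of $X^{\al\s,1}$ transfers to $X^{\al,\s}$. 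The only genuine obstacle is locating this summation--by--parts identity and keeping the sign bookkeeping straight: the non--increasing hypothesis on $\s$ is precisely what makes the Abel weights non--negative, forces their telescoped sum to be at most one, and keeps the denominators positive on the relevant branch.
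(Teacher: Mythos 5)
Your proof is correct, and it is genuinely different from the one in the paper. I checked the Abel summation: with $b_{-1}=0$ one indeed gets $\s(t)S_t=b_n-\s(v_n)\sum_{j=0}^{n-1}\bigl(\s(v_{j+1})^{-1}-\s(v_j)^{-1}\bigr)b_j$; the monotonicity of $\s$ makes all these coefficients nonnegative with telescoped total $1-\s(t)/\s(\ro)\le 1$, the divisions are legitimate because $\s(v_j)\ge\s(t)>0$ along the branch, and the excluded case $\s(t)=0$ is trivial, so the pathwise bound $\sup_{t\in T}|X^{\al,\s}_t|\le 2\sup_{t\in T}|X^{\al\s,1}_t|$ holds for every realization and the proposition follows at once. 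The paper instead argues probabilistically, reusing the machinery of Section \ref{ProofDS}: by Anderson's inequality, boundedness of $X^{\al\s,1}$ is inherited by $X':=X^{\al\hat\s,1}$, where $\hat\s\le 2\s$ is the dyadic weight of (\ref{defhats}); then the localized process $Y$ of (\ref{defY}) (built from $\hat\s$) satisfies $Y_t=X'_t-X'_{\lambda^-(t)}$, exactly as in (\ref{XtoY}), so $Y$ is bounded; finally Proposition \ref{p2} converts boundedness of $Y$ back into boundedness of $X^{\al,\s}$. What the paper's route buys is brevity -- three appeals to facts already established -- plus a reusable comparison principle (Anderson's inequality for weights related by $\al_1\le c\,\al_2$). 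What your route buys is strictly more in this instance: it is elementary and deterministic, using no Gaussianity, no independence, indeed no randomness of the $(\xi_v)$ at all, and it is quantitative, giving the domination $\sup|X^{\al,\s}|\le 2\sup|X^{\al\s,1}|$ realization by realization, hence for free the corresponding comparison of expectations, medians and tails of the suprema rather than merely the transfer of a.s.~boundedness.
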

\begin{proof}
We only give a sketch of the proof.
\begin{enumerate}
\item Recall a general fact from the theory of Gaussian processes: If $X$ and $Y$ are two independent centered
Gaussian processes, then $X+Y$ bounded yields $X$ bounded. This is an immediate consequence
of Anderson's inequality (cf. \cite[p.135]{Lif}).

\item  By applying this fact we obtain: If two weights are related by $\al_1\le c\,\al_2$ and
$X^{\al_2,1}$ is bounded, then $X^{\al_1,1}$ is bounded as well.

\item Suppose now that $X^{\al\s,1}$ is bounded, then  $X^{\al\hat\s,1}$  is bounded,
where the binary weight
$\hat \s$ is defined  in (\ref{defhats}). Set $X':= X^{\al\hat\s,1}$.

\item Let now $Y$ be the process constructed in the article  associated to  $X^{\al,\s}$.
Since $Y_t=X_t'-X_{\lambda^-(t)}'$, we see that if $X'$ is bounded, then $Y$ is bounded.

\item Recall that we know that the boundedness of $Y$ is equivalent to that of $X^{\al,\s}$.
\end{enumerate}
\end{proof}
Examples in Section \ref{Binary} show that the statement of Proposition \ref{al_s_1_als}
cannot be reversed, i.e., in general the boundedness of
$X^{\al,\s}$ does not yield that of $X^{\al\s,1}$.

\section{Compactness properties of $(T,d)$ versus those of $(T,d_X)$}
\label{metrics}
\setcounter{equation}{0}

The aim of this section is to compare the metric $d$ on $T$ defined in (\ref{defd}) and
the Dudley distance $d_X$ introduced in (\ref{defdX}). For $X$ defined by (\ref{defX}) the latter distance
equals
$$
%\label{dX1}
d_X(t,s)^2= \abs{\s(t)-\s(s)}^2\sum_{v\pq t\wedge s}\al(v)^2 + \s(t)^2\sum_{t\wedge s \pc v\pq t}\al(v)^2
+ \s(s)^2 \sum_{t\wedge s\pc v\pq s}\al(s)^2\;.
$$
In particular, if $t\pq s$, this reads as
\be
\label{dX2}
d_X(t,s)^2= \abs{\s(t)-\s(s)}^2\sum_{v\pq t}\al(v)^2 + \s(s)^2\sum_{t\pc v\pq s}\al(v)^2\;.
\ee
Recall, that for $t\pq s$ we have
\be
\label{defd2}
d(t,s)=\max_{t\pc r\pq s}\s(r)\left(\sum_{t\pc v \pq r}\al(v)^2\right)^{1/2}\,.
\ee
Comparing (\ref{dX2}) with (\ref{defd2}), it is not clear at all how these two distances are related in general.
\medskip

In a first result we show that the covering numbers w.r.t.~$d$ and to $d_X$ may be of quite different order.
\begin{prop}
\label{ex1}
There are non--increasing weights $\al$ and $\s$ on a tree $T$ such that the
generated process $X$ is a.s.~bounded and, moreover,
$$
\lim_{\eps\to 0}\frac{N(T,d_X,\eps)}{N(T,d,\eps)}=\infty\;.
$$
\end{prop}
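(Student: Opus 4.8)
The plan is to construct an explicit example where the two metrics genuinely disagree, exploiting the fact that $d$ is a ``maximum along the branch'' quantity while $d_X$ mixes the $\s$--variation term $|\s(t)-\s(s)|^2\sum_{v\pq t}\al(v)^2$ with the increment term. The key structural observation from $(\ref{dX2})$ versus $(\ref{defd2})$ is that $d_X$ can be large even when $d$ is small: if $\s$ drops sharply between a point and its descendant while $\sum_{v\pq t}\al(v)^2$ has already accumulated a large value, then the first summand in $(\ref{dX2})$ inflates $d_X$, whereas $d$ only sees the \emph{local} accumulation $\sum_{t\pc v\pq r}\al(v)^2$ weighted by $\s(r)$. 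So I would aim for weights where $d$--balls are large (few needed to cover $T$) but $d_X$--balls are small (many needed), forcing $N(T,d_X,\eps)/N(T,d,\eps)\to\infty$.

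Concretely, I would work on a binary (or at least steadily branching) tree and choose \emph{homogeneous} weights $\al(t)=\al_{|t|}$, $\s(t)=\s_{|t|}$ so that Theorem \ref{binhom} is available to certify boundedness. The first step is to pick $\al$ and $\s$ satisfying the boundedness criterion $(\ref{condsum})$ (e.g.\ with $\al_k$ non--increasing so that $(\ref{regular})$ holds automatically), guaranteeing $X$ is a.s.~bounded. The second step is to compute, or at least sharply bound from above, $N(T,d,\eps)$ using the entropy estimates of \cite{LifLin10} alluded to in Corollary \ref{c1}; for homogeneous weights these covering numbers are controlled by the single sequence $\al_k\s_k$ and its partial sums, so $d$ collapses the tree into something essentially one--dimensional in the order direction. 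The third and decisive step is to bound $N(T,d_X,\eps)$ \emph{from below}: I would exhibit many points of $T$ that are pairwise $d_X$--separated by at least $\eps$ by using the $\s$--variation term, which in the homogeneous case means that two vertices at different orders $|t|\neq|s|$ sitting on the same branch are already far apart in $d_X$ because $|\s_{|t|}-\s_{|s|}|^2\sum_{v\pq t}\al(v)^2$ is bounded below, \emph{regardless} of how the $\al$--increments behave locally.

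The separation count is where the ratio blows up: in a binary tree the number of vertices at order $n$ is $2^n$, and if the $\s$--drops force each \emph{level} (or each pair of consecutive levels) to contribute a full packing's worth of $d_X$--separated points while $d$ merges many of those into a single $\eps$--ball, then $\log N(T,d_X,\eps)$ picks up the branching factor whereas $\log N(T,d,\eps)$ does not. I would tune the rate at which $\s_k\downarrow 0$ against the growth of $\sum_{v\pq t}\al(v)^2$ so that at scale $\eps$ the $d_X$--packing sees exponentially (in some function of $1/\eps$) more points than the $d$--covering, yielding $N(T,d_X,\eps)/N(T,d,\eps)\to\infty$ as $\eps\to 0$. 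The main obstacle I anticipate is the simultaneous control of \emph{both} covering numbers for a single choice of weights: I need $\s$ decreasing fast enough that $d_X$ separates across levels, yet not so fast that $(\ref{condsum})$ fails and boundedness is lost, and I need $\al$ arranged so that $d$ genuinely coarsens the tree rather than tracking the same level structure $d_X$ sees. Reconciling these competing demands in one explicit weight pair, and rigorously verifying the $d_X$ lower packing bound (which requires checking pairwise separation, not merely a one--sided estimate), is the technical heart of the argument; everything else reduces to invoking Theorem \ref{binhom} and the entropy estimates of \cite{LifLin10}.
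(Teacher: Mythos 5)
Your opening diagnosis of the mechanism is correct -- the variation term $\abs{\s(t)-\s(s)}^2\sum_{v\pq t}\al(v)^2$ in (\ref{dX2}) can inflate $d_X$ while $d$ only registers local weighted increments -- and this is exactly the mechanism the paper's proof exploits. But your proposal has two genuine gaps. First, it is a plan rather than a proof: no explicit weights are exhibited, neither covering number is actually estimated, and you yourself defer the ``technical heart'' (the tuning of $\al$ against $\s$ and the pairwise separation check) without resolving it. Second, and more fundamentally, the counting step you do describe would fail: you claim that on a binary tree $\log N(T,d_X,\eps)$ ``picks up the branching factor whereas $\log N(T,d,\eps)$ does not''. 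Branching can never favour $d_X$ over $d$. For incomparable vertices $t,s$ at the same level $n$ with $|t\wedge s|=m$ and homogeneous weights, the variation term vanishes (since $\s(t)=\s(s)=\s_n$), so
$$
d_X(t,s)=\sqrt{2}\,\s_n\Big(\sum_{k=m+1}^{n}\al_k^2\Big)^{1/2}
\le \frac{1}{\sqrt{2}}\,\big[\,d(t\wedge s,t)+d(t\wedge s,s)\,\big]
=\frac{1}{\sqrt{2}}\,d(t,s)\;,
$$
because $d(t\wedge s,t)\ge \s_n\big(\sum_{k=m+1}^n\al_k^2\big)^{1/2}$ and likewise for $s$. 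Hence every same--level family that is $\eps$--separated in $d_X$ is automatically $\eps$--separated in $d$: branching packings inflate \emph{both} covering numbers, and the only mechanism producing a growing ratio is the cross--level $\s$--variation effect, which lives on a single branch and needs no branching at all. Your related assertion that for homogeneous weights $d$ ``collapses the tree into something essentially one--dimensional'' is also false: on the binary tree with $\al(t)\s(t)=(|t|+1)^{-1}$ one has $\log N(T,d,\eps)\approx\eps^{-2}$ (see the remark after Corollary \ref{bin}), so $N(T,d,\eps)$ is itself exponentially large, and your polynomially many level--separated points could never outgrow it without a mixed level--times--branch packing argument that you do not carry out.

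The paper sidesteps all of this by taking the degenerate tree $T=\N_0$, i.e.\ a single branch, with $\al(0)=\s(0)=1$, $\al(k)=k^{-\nu}$, $\s(k)=k^{-\theta}$ for $k\ge 1$. Boundedness is immediate from the law of the iterated logarithm once $\theta+\nu>1/2$ (no appeal to Theorem \ref{binhom} is needed). Since $\sum_{v\pq k}\al(v)^2\ge\al(0)^2=1$, the $\s$--drop between consecutive integers gives $d_X(k,l)\ge k^{-\theta}-(k+1)^{-\theta}\ge c_\theta\,k^{-\theta-1}$ for $k<l$, whence $N(T,d_X,\eps)\ge c\,\eps^{-1/(\theta+1)}$; on the other hand $\al(k)\s(k)=k^{-(\theta+\nu)}$ and Proposition 6.3 of \cite{LifLin10} give $N(T,d,\eps)\le c\,\eps^{-1/(\theta+\nu)}$. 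Choosing $\nu>1$ makes the ratio blow up. If you want to salvage your write--up, the shortest route is to abandon the binary tree entirely and run your (correct) $\s$--variation idea on a chain, where both the boundedness and the two entropy bounds become elementary.
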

\begin{proof}
Take $T=\N_0=\set{0,1,\ldots}$ and let $\al(0)=\s(0)=1$. If $k\ge 1$
set
$$
\al(k)=k^{-\nu}\quad\mbox{and}\quad \s(k)=k^{-\theta}
$$
for some $\theta,\nu>0$, i.e.,
\be
\label{defXk}
X_k=k^{-\theta}\left[\sum_{j=1}^k j^{-\nu}\,\xi_j +\xi_0\right]\;,\quad k\ge 1\;.
\ee
The law of iterated logarithm tells us that the process $X$ is a.s.~bounded if and only if
$\theta+\nu> 1/2$. Thus let us assume that this is satisfied.

Take now any $1\le k<l$. Then
by (\ref{dX2}) it follows
$$
d_X(k,l)\ge k^{-\theta}-l^{-\theta}\ge
k^{-\theta} - (k+1)^{-\theta}
          \ge c_\theta\, k^{-\theta-1}\;.
$$
Hence, if $1\le k<l\le n$ for some $n\ge 2$, this implies
$$
d_X(k,l)\ge c_\theta n^{-\theta-1}
$$
which yields
\be
\label{dXest}
N(T,d_X,\eps)\ge c\, \eps^{-1/(\theta+1)}
\ee
for some $c>0$ only depending on $\theta$.

On the other hand, we have $\al(k)\s(k)= k^{-(\theta+\nu)}$. As shown in \cite[ Proposition 6.3]{LifLin10}
(apply this proposition with $q=2$, $\H=0$ and $\gamma=2(\theta +\nu)$) a bound
$\al(k)\s(k)\le k^{-(\theta+\nu)}$ implies
\be
\label{upperentr}
N(T,d,\eps)\le c\,\eps^{-1/(\theta+\nu)}\;.
\ee
Of course, if $\nu>1$, then (\ref{dXest}) and (\ref{upperentr}) lead to
\be
\label{different}
\lim_{\eps\to 0}\frac{N(T,d_X,\eps)}{N(T,d,\eps)}=\infty
\ee
completing the proof.
\end{proof}
\medskip

Let us state some interesting consequence of the preceding proposition. To this end recall a result due
to M. Talagrand (cf. \cite{Ta} and \cite{Led}).
Suppose $X=(X_t)_{t\in T}$ is a centered Gaussian process
on an arbitrary index set $T$ and let $d_X$, as in (\ref{defdX}), be the Dudley metric on $T$ generated by $X$.
If $N(T,d_X,\eps)\le \psi(\eps)$ for a non--increasing function $\psi$
satisfying
\be
\label{condpsi}
c_1\,\psi(\eps)\le \psi(\eps/2)\le c_2\,\psi(\eps)
\ee
for certain $1<c_1<c_2$, then this implies
\be
\label{Talest}
-\log\pr{\sup_{t\in T}\abs{X_t}<\eps}\le c\,\psi(\eps)
\ee
for some $c>0$.

We claim now that in the case of processes $X$ defined by (\ref{defXk}) even holds
\be
\label{sbsum}
-\log\pr{\sup_{k\ge 1}\abs{k^{-\theta}\left[\sum_{j=1}^k\,j^{-\nu}\xi_j+\xi_0\right]}<\eps}\approx \eps^{-1/(\theta+\nu)}\;.
\ee
Indeed,
if we apply Proposition 7.1 in \cite{LifLin10} with $\vp(x)=x^{-\gamma}$ where $\gamma=2(\theta+\nu)$, we see
that estimate (\ref{upperentr}) is sharp, i.e., we obtain
$$
N(T,d,\eps)\approx
\eps^{-1/(\theta+\nu)}\;.
$$
Consequently, (\ref{sbsum}) follows by Proposition 9.1 in \cite{LifLin10}.

Comparing (\ref{sbsum}) with (\ref{dXest})
shows that for $\nu>1$ estimate (\ref{Talest}) cannot lead to sharp estimates while, as seen above, the
use of $N(T,d,\eps)$ does so. In some sense this observation proves that the metric $d$ fits better to those processes $X$ than $d_X$
does.
\bigskip

One may ask now whether or not there are examples of trees and weights such that the quotient in
(\ref{different}) tends to zero, i.e., whether there are examples with
\be
\label{different1}
\lim_{\eps\to 0}\frac{N(T,d,\eps)}{N(T,d_X,\eps)}=\infty\;.
\ee
Although we do not know the answer to this question let us shortly indicate why such examples
are hardly to construct provided they exist. Indeed, if $N(T,d,\eps)\approx \eps^{-a}\abs{\log\eps}^b$
for some $a>0$ and $b\ge 0$, then by Proposition 9.1 in \cite{LifLin10} this implies
$$
-\log\pr{\sup_{t\in T}\abs{X_t}<\eps}\approx \eps^{-a}\abs{\log\eps}^b\;.
$$
Consequently, whenever $N(T,d_X,\eps)\approx \psi(\eps)$ with $\psi$ satisfying (\ref{condpsi}), then
by (\ref{Talest}) we get
$$
N(T,d,\eps)\le c\,\psi(\eps)\le c'\,N(T,d_X,\eps)\;,
$$
hence in that situation examples satisfying (\ref{different1}) cannot exist.
\medskip

In spite of this observation we will show now that $d_X(t,s)$ may become arbitrarily small
while $d(t,s)\ge C>0$. Hence an estimate $d(t,s)\le c\,d_X(t,s)$ cannot be valid in general. Recall
that in view of Proposition \ref{ex1} a relation $d_X(t,s)\le c\,d(t,s)$ is impossible as well.
\begin{prop}
\label{ex2}
There are weights $\al$ and $\s$ on $T=\N_0$ such that the corresponding process $X$ is a.s.~bounded
and such that $\lim_{k\to\infty} d_X(\ro,k)=0$
while $d(\ro,k)=C>0$ for all $k\ge 1$.
\end{prop}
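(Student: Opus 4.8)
The index set $T=\N_0$ is a single branch with root $\ro=0$, so the whole construction reduces to choosing two sequences. Writing $S(r):=\sum_{v=1}^r\al(v)^2$ and
$$
g(r):=\s(r)\,S(r)^{1/2}=\s(r)\Big(\sum_{v=1}^r\al(v)^2\Big)^{1/2},
$$
formula $(\ref{defd2})$ gives $d(\ro,k)=\max_{1\le r\le k}g(r)$, i.e.\ $d$ records the running maximum of $g$ along the branch. If in addition we set $\al(\ro)=0$, then the first term in $(\ref{dX2})$ vanishes and $d_X(\ro,k)=g(k)$, the current value of $g$ at the endpoint. Thus the entire point is to build a $g$ that attains its maximum at $r=1$ and then decays to $0$: the running maximum $d(\ro,k)$ stays pinned at that first value, while the endpoint value $d_X(\ro,k)$ tends to $0$.

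The plan is to choose $\al$ with $\sum_{v\ge1}\al(v)^2<\infty$, so that $S(r)$ increases to a finite limit, and to pick any non-increasing $h$ with $h(1)=1$ and $h(r)\to0$. Setting $\s(r):=C\,h(r)/S(r)^{1/2}$ for $r\ge1$ and $\s(0):=\s(1)$ forces $g(r)=C\,h(r)$, whence $d(\ro,k)=\max_{1\le r\le k}C\,h(r)=C$ for every $k\ge1$ and $d_X(\ro,k)=C\,h(k)\to0$. Here $\s$ is non-increasing because it is a product of the non-increasing factors $h(r)$ and $S(r)^{-1/2}$. A fully explicit choice is $C=1$, $\al(\ro)=0$, $\al(v)=2^{-v}$ for $v\ge1$ (so that $S(r)\uparrow\tfrac13$), and $h(r)=1/r$.

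It then remains to check that $X$ is a.s.\ bounded. Since $\al(\ro)=0$ we have $X_k=\s(k)M_k$ with $M_k:=\sum_{v=1}^k\al(v)\xi_v$. Because $\sum_{v\ge1}\al(v)^2<\infty$, the series of independent centered Gaussians defining $M_k$ converges a.s.\ to a finite limit, so $\sup_k|M_k|<\infty$ a.s.; as $\s$ is bounded (by $\s(0)$), we conclude
$$
\sup_k|X_k|\le\Big(\sup_k\s(k)\Big)\sup_k|M_k|<\infty\quad\text{a.s.}
$$
The only mild tension in the argument is to keep $\s$ non-increasing while forcing $g$ to decay, since $g=\s\,S^{1/2}$ and $S^{1/2}$ grows; square-summability of $\al$ removes this tension by keeping $S^{1/2}$ bounded, which simultaneously delivers the boundedness of $X$.
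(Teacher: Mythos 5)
Your proposal is correct and follows essentially the same route as the paper: the paper's proof takes $\s(k)=2^{-k}$, $\al(k)=k^{-1}$ with $\al(\ro)=0$, which is precisely one instance of your mechanism --- square-summable $\al$ gives a.s.\ boundedness of $X$, while $g(r)=\s(r)\left(\sum_{v=1}^{r}\al(v)^2\right)^{1/2}$ peaks at $r=1$ and decays to $0$, so the running maximum $d(\ro,k)$ stays constant although the endpoint value $d_X(\ro,k)$ vanishes. Your write-up simply packages this as a general recipe (reverse-engineering $\s$ from a target profile $h$) rather than verifying one concrete example.
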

\begin{proof}
For $k\in\N_0$ choose $\s(k)=2^{-k}$ while $\al(0)=0$ and $\al(k)=k^{-1}$
for $k\ge 1$. Of course, the generated process $X$ is a.s.~bounded. Moreover, if $k\ge 1$, then it follows that
$$
d_X(\ro,k)= 2^{-k}\left(\sum_{v=1}^k v^{-2}\right)^{1/2}\;.
$$
In particular, $d_X(\ro,k)\to 0$ quite rapidly as $k\to\infty$. On the other hand,
$$
d(\ro,k)= 2^{-1}\al(1)=2^{-1}
$$
and this completes the proof with $C=2^{-1}$.
\end{proof}

\section{More about the relation between processes and their entropy}
\setcounter{equation}{0}
\label{s:more_ent}

So far, we came out with a somewhat messy set of relations  between the processes
and their entropy. Let us try to rearrange it again and to put the things in order.
We have three consecutively generated processes $X\to\hat X\to Y$. In this section, we will not identify
or replace $X$ by $\hat X$, as we did sometimes before.

Before proceeding further, let us make a useful and well--known identification. Suppose $X=(X_t)_{t\in T}$ is an arbitrary
Gaussian process (in fact we only need that it is a process with finite second moments) modeled over a probability
space $(\Omega,\mathcal A,\P)$. Then we may regard $X$ as subset of the Hilbert space $L_2(\Omega,\mathcal A,\P)$, i.e.,
 we identify $X$ with $\set{ X_t : t\in T}$ and the induced distance equals
$$
\norm{X_t-X_s}_2=\left(\E|X_t-X_s|^2\right)^{1/2}=d_X(t,s)\;.
$$
In particular, we may also build the absolutely convex hull of $X$ in $L_2(\Omega,\mathcal A,\P)$ which we denote by
$\mathrm{aco}(X)$.
\medskip

Suppose now that the processes $X$, $\hat X$ and $Y$ are defined as in (\ref{defX}), (\ref{defhatX}) and (\ref{defY}), respectively, where
for the construction of $\hat X$ and $Y$ we use the partition $(I_k)_{k\in \Z}$ given by (\ref{defIk}).
First we show that $\mathrm{aco}(X)$,  $\mathrm{aco}(\hat X)$, and  $\mathrm{aco}(Y)$ are the same sets up to a numeric constant.
Namely, the following is valid.
\begin{prop}
We have
\be \label{acoXXY}
   \mathrm{aco}(X)\subseteq \mathrm{aco}(\hat X)\subseteq 2\, \mathrm{aco}(Y) \subseteq  4\, \mathrm{aco}(\hat X) \subseteq 8\, \mathrm{aco}(X).
\ee
\end{prop}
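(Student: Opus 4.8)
The plan is to read the four-term chain as two pairs of inclusions of a different nature: the two \emph{outer} inclusions $\mathrm{aco}(X)\subseteq\mathrm{aco}(\hat X)$ and $4\,\mathrm{aco}(\hat X)\subseteq 8\,\mathrm{aco}(X)$ compare $X$ and $\hat X$ \emph{pointwise}, while the two \emph{inner} inclusions $\mathrm{aco}(\hat X)\subseteq 2\,\mathrm{aco}(Y)$ and $2\,\mathrm{aco}(Y)\subseteq 4\,\mathrm{aco}(\hat X)$ compare $\hat X$ with its localization $Y$ through the linear identities (\ref{YtoX}) and (\ref{XtoY}) established in the proof of Proposition \ref{p2}. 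Throughout I would use the elementary principle that if every generator $a$ of a set $A\subseteq L_2$ can be written as $a=\sum_i c_i b_i$ with $b_i$ generators of $B$ and $\sum_i|c_i|\le c$, then $A\subseteq c\,\mathrm{aco}(B)$, and hence $\mathrm{aco}(A)\subseteq c\,\mathrm{aco}(B)$ since $c\,\mathrm{aco}(B)$ is itself absolutely convex. All four inclusions reduce to producing such representations with the right $\ell^1$-norm of coefficients.

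For the outer inclusions I would start from the observation that $X$ and $\hat X$ differ only by a bounded pointwise factor. Since $t\in I_k$ means $2^{-k-1}<\s(t)\le 2^{-k}$ while $\hat\s(t)=2^{-k}$ by (\ref{defhats}), we have $X_t=\frac{\s(t)}{\hat\s(t)}\,\hat X_t$ with $\frac{\s(t)}{\hat\s(t)}\in(1/2,1]$. As the coefficient lies in $[-1,1]$, each generator $X_t$ belongs to $\mathrm{aco}(\hat X)$, giving the first inclusion. Conversely $\hat X_t=\frac{\hat\s(t)}{\s(t)}\,X_t$ with $\frac{\hat\s(t)}{\s(t)}\in[1,2)$, so $\hat X_t\in 2\,\mathrm{aco}(X)$; this is $\mathrm{aco}(\hat X)\subseteq 2\,\mathrm{aco}(X)$, which multiplied by $4$ is exactly the last inclusion of the chain.

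For the inner inclusions I would invoke the identities of Proposition \ref{p2}, read for the pair $(\hat X,Y)$: since $Y$ is the localization associated with the binary weight $\hat\s$, the process playing the role of the reduced $X$ in (\ref{YtoX}) and (\ref{XtoY}) is precisely $\hat X$. Identity (\ref{YtoX}) then reads, for $t\in I_k$, $\hat X_t=\sum_{\ell\le k}2^{-(k-\ell)}\,Y_{\lambda_\ell(t)}$ over those $\ell$ with $B_\ell(t)\ne\emptyset$; the coefficients are nonnegative and satisfy $\sum_{\ell\le k}2^{-(k-\ell)}\le\sum_{j\ge 0}2^{-j}=2$, so $\hat X_t\in 2\,\mathrm{aco}(Y)$ and $\mathrm{aco}(\hat X)\subseteq 2\,\mathrm{aco}(Y)$. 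In the other direction, (\ref{XtoY}) gives $Y_t=\hat X_t-\frac{\hat\s(t)}{\hat\s(\lambda^-(t))}\,\hat X_{\lambda^-(t)}$ (and $Y_t=\hat X_t$ when $t\equiv\ro$); since $\hat\s$ is non--increasing and $\lambda^-(t)\pq t$, the ratio lies in $[0,1]$, so the two coefficients have $\ell^1$-norm at most $2$ and $Y_t\in 2\,\mathrm{aco}(\hat X)$. This is $\mathrm{aco}(Y)\subseteq 2\,\mathrm{aco}(\hat X)$, equivalently $2\,\mathrm{aco}(Y)\subseteq 4\,\mathrm{aco}(\hat X)$.

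The only genuine point requiring care is the bookkeeping of constants: the identities (\ref{YtoX}) and (\ref{XtoY}) were derived under the reduction to a weight of the form (\ref{sig}), so one must be certain that in the present notation they link $\hat X$ — not the original $X$ — with $Y$; otherwise an extra factor $\hat\s/\s$ would spoil the constant $2$ in the second inclusion. Once this identification is fixed, the argument is purely a matter of reading off the coefficient $\ell^1$-norms $1,2,2,2$ and using the non--increasingness of $\hat\s$ to bound the single ratio in (\ref{XtoY}); no analytic estimate enters.
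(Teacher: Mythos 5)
Your proof is correct and takes essentially the same route as the paper: the outer inclusions come from the pointwise ratio $\hat\s(t)/\s(t)\in[1,2)$, and the inner ones from the identities (\ref{YtoX}) and (\ref{XtoY}) read for the pair $(\hat X,Y)$, which is exactly how the paper argues via the set inclusions $\hat X\subseteq\sum_{m\ge 0}2^{-m}Y$ and $Y\subseteq\hat X-[0,1]\cdot\hat X$. Your explicit check that these identities link $\hat X$ (not the original $X$) with $Y$ is precisely the identification the paper uses implicitly.
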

\begin{proof}
By the definition of $\hat X$ it follows that
\be\label{more1}
   \mathrm{aco}(X)\subseteq \mathrm{aco}(\hat X)\subseteq 2 \, \mathrm{aco}(X).
\ee
On the other hand, (\ref{YtoX}) yields
\be  \label{more1a}
   \hat X  \subseteq \sum_{m=0}^\infty 2^{-m} Y,
\ee
hence
\be \label{more2}
   \mathrm{aco}(\hat X) \subseteq \sum_{m=0}^\infty 2^{-m} \mathrm{aco}(Y) = 2\, \mathrm{aco}(Y),
\ee
while (\ref{XtoY}) implies
$$%\label{more2a}
   Y \subseteq  \hat X -[0,1]\cdot \hat X,
$$
hence
\be\label{more3}
   \mathrm{aco}(Y) \subseteq 2\, \mathrm{aco}(\hat X).
\ee
By combining the inclusions (\ref{more1}), (\ref{more2}), (\ref{more3}), claim (\ref{acoXXY}) follows.
\end{proof}

\begin{remark}
\rm
Clearly, (\ref{acoXXY}) means that the three processes are either all bounded or all are unbounded. But,
certainly, it contains even more information.
\end{remark}
\medskip

Now we move to covering numbers in order to clarify the role of the distance $d$ defined in (\ref{defd}).
We will show now that on the {\it logarithmic} level there is no much difference between
 $N(X,||\cdot||_2, \eps)=N(T,d_{X},\eps)$ and $N(T,d,\eps)$.

 \begin{thm} \label{t51} We have
 $$%\label{t51_dud}
     \int_0^{\infty} \sqrt{\log N(T, d_{X},u)} \d u<\infty\quad
     \Leftrightarrow\quad
     \int_0^{\infty} \sqrt{\log N(T,d,u) }\d u<\infty\;.
 $$
 and
 $$%\label{t51_sud}
    \sup_{\eps>0}\,\eps^2\,\log N(T,d_{X}, \eps)  <\infty\quad
    \Leftrightarrow\quad
     \sup_{\eps>0}\,\eps^2\,\log N(T,d,\eps)<\infty\;.
 $$
\end{thm}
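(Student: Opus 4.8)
The plan is to interpolate between $d$ and $d_X$ through the auxiliary process $Y$ of (\ref{defY}) and the dyadic process $\hat X$, and to prove two comparisons on the logarithmic scale: first that $N(T,d_X,\cdot)$ and $N(T,d_Y,\cdot)$ agree, and then that $N(T,d_Y,\cdot)$ and $N(T,d,\cdot)$ agree. Since each of the two displayed conditions is of the form ``a functional of the covering numbers is finite'', and since the estimates below preserve finiteness of both the Dudley integral $\int_0^\infty\sqrt{\log N}\,d\eps$ and the Sudakov supremum $\sup_\eps\eps^2\log N$, composing the two comparisons yields the theorem. Note that by (\ref{cond1}) all sets involved have finite $L_2$--diameter, so the covering numbers equal $1$ beyond a fixed scale and every integral is effectively over a bounded $\eps$--range.

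For the first comparison I would turn the linear relations into inclusions of subsets of $L_2$. Relation (\ref{XtoY}) gives $\{Y_t\}\subseteq\{\hat X_u\}-[0,1]\cdot\{\hat X_v\}$, while (\ref{more1a}) gives $\{\hat X_t\}\subseteq\sum_{m\ge0}2^{-m}\{Y_s\}$ (the geometric weights are available precisely because $\hat\s$ is dyadic, and $\hat X_\ro=Y_\ro=0$ makes the missing summands harmless). Covering numbers are submultiplicative under Minkowski sums; for the one--parameter factor I would use the cone estimate $N([0,1]\cdot A,\eps)\le N(A,\eps/2)\,(1+2\,\diam(A)/\eps)$, whose surplus $\sqrt{\log(1+2\,\diam(A)/\eps)}$ is integrable and whose product with $\eps^2$ stays bounded, hence harmless for both functionals. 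For the infinite geometric sum one splits the radius as $\eps\,w_m$ with $\sum_m w_m=1$, taking $w_m\propto 2^{-m/2}$ so that $\sum_m (2^m w_m)^{-1}<\infty$ for the integral, and $w_m\propto 2^{-2m/3}$ so that $\sum_m (2^m w_m)^{-2}<\infty$ for the supremum; this shows that passing to a geometric sum inflates neither functional by more than a constant. Finally $\hat X_t=c_t X_t$ with $c_t\in[1,2)$, so $\{X_t\}\subseteq[1/2,1]\cdot\{\hat X_t\}$ and conversely, and the same cone estimate gives $N(T,d_X,\cdot)\approx N(T,d_{\hat X},\cdot)$. Chaining, $N(T,d_X,\cdot)$ and $N(T,d_Y,\cdot)$ are equivalent for both functionals.

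For the second comparison one inequality is free: Proposition \ref{p4} together with (\ref{tildeN}) gives $N(T,d_Y,\eps)\le\tilde N(T,d,\eps)+1\le N(T,d,\eps/2)+1$, so finiteness of the $d$--functionals forces finiteness of the $d_Y$--functionals, and with the first comparison this already proves the implications ``$d$--condition $\Rightarrow$ $d_X$--condition''. The reverse, bounding $N(T,d,\cdot)$ by the entropy of $d_Y$ (equivalently of $d_X$), is the main obstacle, and it is genuinely delicate: the metrics are pointwise incomparable in \emph{both} directions (Propositions \ref{ex1} and \ref{ex2}), so no pointwise domination is available, and a naive Sudakov/Gaussian argument is useless because it would require a priori boundedness of $X$ rather than mere finiteness of $d_X$--entropy. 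My plan is to pass to $\hat d$ (with $d\le\hat d\le 2d$ by Proposition \ref{p1}) and to use the block structure (1)--(3): along a branch $\hat d(t,s)$ should be comparable to the maximum, over the blocks $I_j$ met by $(t,s]$, of the within--block increments $2^{-j}(\sum_{v\in(t,s]\cap I_j}\al(v)^2)^{1/2}$, each of which is essentially a $d_Y$--distance $(\E|Y_w|^2)^{1/2}$ at the top $w$ of that block. Since $\hat d$ and $d_Y$ coincide inside a single block, a $d_Y$--order net refined at block boundaries ought to be a $\hat d$--order net at comparable scale; quantitatively I would run this through the disjoint--order--interval description of $N(T,d,\cdot)$ in \cite[Propositions~5.2 and 3.3]{LifLin10}, converting a $\hat d$--packing into a $d_Y$--packing by selecting inside each separating interval the block realizing the maximal increment. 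Performing this conversion with uniform constants, in both the summable regime of (\ref{Dud}) and the critical regime $\log N\approx\eps^{-2}$, is the technical heart of the argument.

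For the supremum condition alone one can sidestep the reverse block analysis via small deviations: if $\sup_\eps\eps^2\log N(T,d_X,\eps)<\infty$ then $\psi(\eps)=c\,\eps^{-2}$ dominates $N(T,d_X,\eps)$ and satisfies the regularity (\ref{condpsi}), so Talagrand's bound (\ref{Talest}) gives $-\log\pr{\sup_t|X_t|<\eps}\le c'\eps^{-2}$; comparing with the lower bound for the same small--ball probability in terms of $N(T,d,\cdot)$ from \cite[Proposition~9.1]{LifLin10} returns $\sup_\eps\eps^2\log N(T,d,\eps)<\infty$. This route does not obviously adapt to the Dudley integral, since Talagrand's estimate needs the doubling (\ref{condpsi}) which fails in the borderline regime governing (\ref{Dud}); I would therefore keep the entropy--comparison argument as the primary line and regard the reverse half of the second comparison as the step most likely to need real work.
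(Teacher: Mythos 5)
Your overall architecture (interpolating $d\leftrightarrow d_Y\leftrightarrow d_{\hat X}\leftrightarrow d_X$) and your treatment of the direction ``finiteness for $d$ $\Rightarrow$ finiteness for $d_X$'' coincide with the paper's proof: Proposition \ref{p4} with (\ref{tildeN}) handles $d\to d_Y$, the inclusion (\ref{more1a}) with weighted radii handles $d_Y\to d_{\hat X}$, and a cone estimate handles $d_{\hat X}\to d_X$; all of that is sound. The genuine gap is the converse direction, which you explicitly leave unexecuted (``the technical heart'', ``most likely to need real work''). What you sketch for it --- select in each separating order interval the block realizing the maximal increment, and treat the resulting nodes as a $d_Y$--packing --- is not a proof: having many disjoint intervals each containing a \emph{large increment} is not the same as having many points that are \emph{pairwise far apart}, and the passage from the former to the latter is exactly the crux. (It can in fact be salvaged, but only through a case analysis --- same block versus different blocks, comparable versus incomparable tops --- exploiting the disjointness of the intervals and the cross-block independence of $Y$; none of this appears in your text.) The paper's proof of this direction is complete, much shorter, and bypasses $Y$ entirely: from the $N(T,d,2\eps)-1$ disjoint order intervals $(t_i,s_i]$ with $d(t_i,s_i)\ge\eps$ given by \cite[Proposition 5.2]{LifLin10}, it forms the increments $\eta_i=X_{r_i}-\frac{\s(r_i)}{\s(t_i)}\,X_{t_i}$ of (\ref{defeta}). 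Subtracting this multiple of $X_{t_i}$ erases all summands below $t_i$, so the $\eta_i$ are independent, satisfy $\norm{\eta_i}_2\ge\eps$ by (\ref{esteta0}), and each lies in $X-[0,1]\cdot X$ because $\s(r_i)/\s(t_i)\in[0,1]$. Hence they are pairwise $\sqrt{2}\,\eps$--separated, giving $N(X-[0,1]\cdot X,\norm{\,\cdot\,}_2,\eps/\sqrt{2})\ge N(T,d,2\eps)-1$, and the cone lemma (\ref{noaco2}) strips off the factor $[0,1]\cdot X$, yielding $N(T,d_X,\eps)^2\ge\frac{\eps}{M_X}\left[N(T,d,6\sqrt{2}\,\eps)-1\right]$. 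This single inequality settles both halves of the theorem (Dudley integral and Sudakov supremum) at once; it is nothing but a re-use of the packing already constructed in the proof of Theorem \ref{DS}, second part, which was available to you.

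Your fallback for the supremum condition via Talagrand's estimate is incorrect and cannot plug the hole. In (\ref{Talest}) the function $\psi$ must dominate $N(T,d_X,\eps)$ itself, not $\log N(T,d_X,\eps)$; from $\sup_\eps\eps^2\log N(T,d_X,\eps)<\infty$ you only get $N(T,d_X,\eps)\le\exp(c\,\eps^{-2})$, a bound violating the doubling condition (\ref{condpsi}), so Talagrand's theorem does not apply at all. Worse, the conclusion you want, $-\log\pr{\sup_{t\in T}\abs{X_t}<\eps}\le c'\eps^{-2}$, is false in this generality: the process $X''$ of Corollary \ref{bin} has $\log N(T,d,\eps)\approx\eps^{-2}$ (hence, by the packing inequality above, also $\sup_\eps\eps^2\log N(T,d_X,\eps)<\infty$), yet it is a.s.\ unbounded, so its small-ball probability vanishes and the left-hand side is $+\infty$. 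The critical regime $\log N\approx\eps^{-2}$ --- the only one where the Sudakov-type equivalence has content --- is precisely where the small-ball detour collapses, so the packing argument is not optional but necessary.
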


\begin{proof}
We first give the lower bounds for $N(T, d_{X},\eps)$.
By (\ref{defeta}) and (\ref{esteta0}) it follows that
\be  \label{more_nlb}
 %%N( 2\, \mathrm{aco}(X),||\cdot||_2,\frac{\eps}{\sqrt{2}})
 %%\ge
 N(X-[0,1]\cdot X,||\cdot||_2,\frac{\eps}{\sqrt{2}})
 \ge N(T,d,2\eps)-1.
\ee
Our next task is to replace $X-[0,1]\cdot X$ by $X$ in (\ref{more_nlb})
by using the following trivial fact.

\begin{lem}
Let $X$ be a subset of a normed space and $M_X:=\sup_{x\in X}||x||$. Then
\be \label{noaco1}
   N([0,1]\cdot X,\norm{\,\cdot\,},2\eps) \le N(X,\norm{\,\cdot\,},\eps)  \, \frac {M_X}{\eps}\quad\mbox{and}
\ee
\be \label{noaco2}
   N(X-[0,1]\cdot X,\norm{\,\cdot\,},3\eps) \le N(X,\norm{\,\cdot\,},\eps)^2  \, \frac {M_X}{\eps}\ .
\ee
\end{lem}

\begin{proof} [ of the lemma] Let $B$ be an $\eps$--net for $X$ and set
\[
 C=\left\{ \frac{j\eps}{M_X}\ y \;:\ y\in B,\;  j\in \N\,,\,  1\le j \le \frac{M_X}{\eps}\right\}.
 \]
 Clearly,
 \[
 \#\{C\}\le \#\{B\} \ \frac {M_X}{\eps}\ .
 \]
 Take any $z=\theta x\in [0,1]\cdot X$ with  $x\in X$, $\theta\in[0,1]$.
 Find  $y\in B$ and a positive integer $j \le \frac{M_X}{\eps}$ such that
 \[
      ||x-y||< \eps, \quad
      \left| j - \frac{\theta M_X}{\eps} \right| \le 1\;.
  \]
 Then $z':= \frac{j\eps}{M_X}\ y\in C$ and observe that
 \begin{eqnarray*}
    ||z-z'|| &=& \left\| \theta x- \frac{j\eps}{M_X} \ y\right\|
 \\
    &\le& \theta ||x-y|| + \left|\theta -\frac{j\eps}{M_X}\right|\ ||y||
 \\
 &<& \eps + \frac{\eps}{M_X} \, M_X =2\eps.
 \end{eqnarray*}
 Hence, $C$ is a $2\eps$--net for $[0,1]\cdot X$ and the first claim of the lemma is proved.
 The second one follows immediately.
\end{proof}

We may proceed now with the proof of Theorem \ref{t51}.
Combining (\ref{noaco2}) with (\ref{more_nlb}) leads to
$$
%\label{DX}
  N(T,d_X,\eps)^2\ge \frac {\eps}{M_X}
  \left[ N(T,d,6\sqrt{2}\eps)-1   \right].
$$
Hence, we conclude
 \[
     \int_0^{\infty} \sqrt{\log N(T, d_X,u)} \d u<\infty
   \quad  \Rightarrow\quad
     \int_0^{\infty} \sqrt{\log N(T,d,u) }\d u<\infty
 \]
 and
 \[
    \sup_{\eps>0}\eps^2\,\log N(T,d_X, \eps)  <\infty
   \quad \Rightarrow\quad \sup_{\eps>0}\eps^2\,\log N(T,d,\eps)<\infty\,.
 \]

Conversely, we will move now towards an upper bound for  $N(T,d_X,\eps)$.
By Proposition \ref{p4} and Proposition 3.2 of \cite{LifLin10} we have
\be \label{more_nub}
  N(T,d_Y,\eps)
  \le \tilde N(T,\hat d,\eps) +1
  \le \tilde N(T,d,\eps/2) +1
  \le N(T,d,\eps/4)+1.
\ee
Next, we trivially obtain from (\ref{more1a}) that
\begin{eqnarray*}
  N(T, d_{\hat X}, 2\eps)
  &\le&  N\Big(T, d_{\hat X}, \left(\sum_{m=0}^\infty (m+1)^{-2}\right) \eps \Big)
  \\
   &\le& \prod_{m=0}^\infty N(T, d_{2^{-m} Y}, (m+1)^{-2} \eps)
  \\
   &=& \prod_{m=0}^\infty N(T, d_Y, 2^m (m+1)^{-2} \eps).
\\
   &\le& \prod_{m=0}^\infty N_*( T, d, 2^{m-2} (m+1)^{-2} \eps),
\end{eqnarray*}
where we used (\ref{more_nub}) on the last step and
\[
   N_*( T, d, r) :=
   \left\{
   \begin{array}{ccc}
   N( T, d, r)+1 &:&  N( T, d_Y, 4r)>1,
   \\
   1 &:&  N( T, d_Y, 4r)=1.
   \end{array}
   \right.
\]
It follows that
\be \label{more5}
  \log N(T, d_{\hat X}, 2\eps)
  \le \sum_{\{m\ge 0:\, 2^m (m+1)^{-2} \eps \le M_Y\}} \log \left( N(T,d,2^{m-2} (m+1)^{-2} \eps)+1\right) ,
\ee
where $M_Y:=\sup_{t\in T} ||Y_t||_2$.
For the Dudley integral this implies
\begin{eqnarray*}
  \int_0^\infty \sqrt{\log N(T, d_{\hat X}, 2\eps)}  \d\eps
  &\le& \sum_{m=0}^\infty  \, \int_0^{\frac{M_Y}{2^m (m+1)^{-2}} }
  \sqrt{\log \left(N(T,d,2^{m-2} (m+1)^{-2} \eps)+1\right)} \d\eps
  \\
  &\le&   \sum_{m=0}^\infty  \frac{(m+1)^2}{2^{m-2}}  \,
  \int_0^{\infty} \sqrt{\log\left(N(T,d,u)+1\right)} \d u
  \\
  &=& C\ \int_0^{\infty} \sqrt{\log \left(N(T,d,u)+1\right)} \d u\,.
\end{eqnarray*}
Hence,
 \[
    \int_0^{\infty} \sqrt{\log N(T,d,u) }\d u<\infty
    \quad\Rightarrow\quad
    \int_0^{\infty} \sqrt{\log N(T, d_{\hat X},u)} \d u<\infty\, .
 \]
 Moreover,  (\ref{more5}) yields
 \[   \sup_{\eps>0}\eps^2\,\log N(T,d,\eps)<\infty
      \quad\Rightarrow\quad
      \sup_{\eps>0}\eps^2\,\log N(T, d_{\hat X}, \eps)  <\infty\,.
 \]
The final passage goes from $\hat X$ to $X$. Since $X\subseteq [0,1]\cdot \hat X$,
by applying (\ref{noaco1}) to $\hat X$ we obtain
\[
  \log N(T, d_X,2\eps) \le \log N([0,1]\cdot \hat X,||\cdot||_2,2\eps)
  \le \log N(T, d_{\hat X},\eps) + \log\left(\frac{M_{\hat X}} {\eps}\right).
\]
Hence
\[
     \int_0^{\infty} \sqrt{\log N(T,d_{\hat X},u)} \d u<\infty
   \quad  \Rightarrow\quad
     \int_0^{\infty} \sqrt{\log N(T,d_X,u)} \d u<\infty
 \]
 as well as
 \[   \sup_{\eps>0}\eps^2\,\log N(T,d_{\hat X},\eps)<\infty
      \quad\Rightarrow\quad
      \sup_{\eps>0}\eps^2\,\log N(T,d_X,\eps)<\infty.
 \]
By combining the preceding estimates we finish the proof.
\end{proof}

\section{The binary tree with homogeneous weights}
\label{Binary}
\setcounter{equation}{0}

Before investigating Gaussian processes on binary trees let us shortly recall some basic facts
about suprema of Gaussian sequences.

Let $(X_1,\dots, X_n)$ be a centered Gaussian random vector. Introduce the following notations:
$$
\s_1^2:=\min_j \E X_j^2,\quad \s_2^2:=\max_j \E X_j^2,\quad S:=\max_j X_j\;,
$$
and
let $m_S$ be a median of $S$.
Then the following is well known.
\begin{itemize}
\item
It is true that
\be \label{mSES}
   m_S\le \E S.
\ee
See \cite{Lif}, p.143.

\item The following concentration principle is valid:
\[
  \P(S>m_S+r) \le \hat\Phi(r/\s_2)\le  \exp(-r^2/2\s_2^2), \qquad \forall r>0,
\]
where
\[
 \hat\Phi(r)=\frac{1}{\sqrt{2\pi}}\int_r^{\infty} e^{-\frac{u^2}{2}}\d u
\]
is the standard Gaussian tail.  See \cite{Lif}, p.142.
By combining this with (\ref{mSES}) we also have
\be \label{tailS}
 \P(S>\E S+r) \le   \exp(-r^2/2\s_2^2), \qquad \forall r>0.
\ee

\item It is true that
\be \label{ESub}
 \E S\le  \sqrt{2\log n}\ \s_2\, .
\ee
See \cite{Lif}, p.180.

\item
If $X_1,\ldots,X_n$ are independent, then
\be \label{sudakov}
 \E S\ge c \sqrt{\log n}\ \s_1 \, .
\ee
with $c=0.64$. See \cite{Lif}, p.193--194.
\end{itemize}
Remark that the same properties hold true for
 \[
   S':= \max_{j\le n} |X_j| = \max_{j\le n}\max\{X_j,-X_j\}.
 \]
\bigskip

Let $T$ be a binary tree and suppose that the weights depend only on the level
numbers, i.e.~$\al(t)=\al_{|t|}$ and $\s(t)=\s_{|t|}$ for some sequences $(\al_k)_{k\ge 0}$
and $(\s_k)_{k\ge 0}$ of positive numbers with $(\s_k)_{k\ge 0}$ non--increasing.
The following two theorems give, with a certain overlap, necessary and sufficient
conditions for the boundedness of  $(X_t)_{t\in T}$ in that case.

\begin{thm} \label{t61}
a) If $X=(X_t)_{t\in T}$ is a.s.~bounded, then
\be \label{sumal}
 G:=\sup_n \ \s_n \, \sum_{k=1}^n \al_k <\infty.
\ee
b) Moreover, if $(\al_k)_{k\ge 0}$ satisfies the regularity assumption
\be \label{regal}
 Q:= \sup_n \sup_{n\le k\le 2n} \frac{\al_k}{\al_n} <\infty,
\ee
then $X$ is a.s.~bounded if and only if $(\ref{sumal})$ holds.
\end{thm}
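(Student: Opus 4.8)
The plan is to set $W_t:=\sum_{v\pq t}\al(v)\xi_v$, so that $X_t=\s_{|t|}W_t$, to assume $\al(\ro)=0$, and to work throughout with $\E\sup_t\abs{X_t}$, which is finite as soon as $X$ is a.s.~bounded by Fernique's theorem (exactly as in the proof of Theorem~\ref{DS}). For the necessity in a) I would bound $\E\max_{|t|=n}W_t$ from below by a \emph{greedy descent}: starting at $\ro$, at each level choose the offspring carrying the larger of its two fresh variables, obtaining a random path $\ro=v_0\pc\dots\pc v_n$ whose increment at level $k$ equals $\max(\xi',\xi'')$ for two offspring variables of $v_{k-1}$. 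Distinct levels use disjoint offspring variables, so these increments are independent, each of mean $\E\max(\xi',\xi'')=1/\sqrt\pi>0$; hence $\E\max_{|t|=n}W_t\ge\pi^{-1/2}\sum_{k=1}^n\al_k$. Since $\max_{|t|=n}X_t=\s_n\max_{|t|=n}W_t\le\sup_t\abs{X_t}$, taking expectations gives $\s_n\sum_{k=1}^n\al_k\le\sqrt\pi\,\E\sup_t\abs{X_t}$ for every $n$, which is exactly $(\ref{sumal})$.

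For the sufficiency in b) I would first read off from $(\ref{sumal})$ and $(\ref{regal})$ the pointwise bound $\s_n\,n\,\al_n\le 2QG$: regularity with base $k\in[n/2,n]$ gives $\al_k\ge\al_n/Q$, so $\sum_{k=1}^n\al_k\ge(n/2)\al_n/Q$. A natural soft attempt is then to cut the levels into dyadic blocks $B_q=(2^q,2^{q+1}]$ and, for $t$ at level $n\in B_p$ with ancestor $a$ at level $2^p$, to use monotonicity of $\s$ to write $\abs{X_t}\le\s_{2^p}\abs{W_a}+\s_{2^p}\abs{W_t-W_a}$, so that $\sup_t\abs{X_t}\le\sup_p\s_{2^p}U_p+\sup_p\s_{2^p}V_p$ with $U_p:=\max_{|a|=2^p}\abs{W_a}$ and $V_p$ the largest within-block increment over $B_p$. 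The within-block part is genuinely easy: by $(\ref{regal})$ every weight in $B_p$ is $\le Q\al_{2^p}$, so $V_p$ is a maximum of at most $2^{2^{p+1}}$ Gaussians of standard deviation $\le Q\al_{2^p}\sqrt{2^p}=:L_p$; the bound $(\ref{ESub})$ gives $\s_{2^p}\E V_p\le CG$ and, decisively, the concentration scale $\s_{2^p}L_p\le 2Q^2G\,2^{-p/2}$ tends to $0$. As the blocks involve disjoint families $(\xi_v)$ the $V_p$ are independent, so the Gaussian tail $(\ref{tailS})$ together with Borel--Cantelli yields $\E\sup_p\s_{2^p}V_p<\infty$.

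The difficulty is concentrated in the checkpoint term $\sup_p\s_{2^p}U_p$, and this is where the soft approach breaks down. The only concentration scale available for $U_p$ is $\s_{2^p}\tau_{2^p}$ with $\tau_n^2=\sum_{k\le n}\al_k^2$, and one merely gets $\s_{2^p}\tau_{2^p}\le\sqrt8\,Q^2G$, a quantity that does \emph{not} tend to $0$ (its mass sits at the top of the tree); consequently a per-checkpoint union bound leaves a spurious $\sqrt{\log p}$ factor and fails. I therefore expect the real argument must be a first--moment / barrier estimate carried out on the whole tree at once: bound $\pr{\sup_t X_t>u}$ by the expected number of vertices $t$ with $X_t>u$ whose ancestral values $\s_{|v|}W_v$ stay below a barrier that is linear in the level, the slope being pinned down by $\s_n\,n\,\al_n\le 2QG$. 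For $u$ large this expected count should be summable over all levels (the branching factor $2^n$ being beaten by the Gaussian cost of reaching the barrier), giving $\pr{\sup_t X_t>u}\le e^{-c(u-u_0)}$ and hence $\E\sup_t X_t<\infty$. Constructing and tuning this barrier under the weak, merely regular control on $(\al_k)$ --- where both halves of $(\ref{regal})$ enter, the upper half to cap increments inside a block and the lower half through $\s_n\,n\,\al_n\le 2QG$ --- is the step I anticipate to be the crux of the proof.
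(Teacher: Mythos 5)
Your part a) is correct and is precisely the paper's argument: the same greedy descent choosing at each vertex the offspring with the larger fresh Gaussian variable, i.i.d.~increments of positive mean, and Fernique's theorem to pass from a.s.~boundedness to $\E\sup_{t\in T}X_t<\infty$. The genuine gap is in part b): you do not prove it. Your decomposition $\sup_t\abs{X_t}\le\sup_p\s_{2^p}U_p+\sup_p\s_{2^p}V_p$ treats the within-block term $V_p$ correctly, but the checkpoint term is not merely hard for your method --- it is circular. Since $X_a=\s_{2^p}W_a$ when $\abs{a}=2^p$, one has $\s_{2^p}U_p=\max_{\abs{a}=2^p}\abs{X_a}$, so $\sup_p\s_{2^p}U_p$ is exactly the supremum you are trying to bound, restricted to the checkpoint levels; the decomposition hands the problem back unchanged. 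The barrier/first-moment argument you then invoke is only announced, never carried out, so the sufficiency direction remains open in your write-up.

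The paper's proof needs no checkpoints and no barrier; it is a direct $L^1$ estimate. For $t$ with $\abs{t}\in B_M:=[2^M,2^{M+1})$ decompose the \emph{whole} path sum into within-block pieces,
$$
\sum_{v\pq t}\al(v)\xi_v\;=\;\sum_{m=0}^M\;\sum_{v\pq t,\ \abs{v}\in B_m}\al(v)\xi_v\;\le\;\sum_{m=0}^M U_m,
$$
where $U_m$ is the sup over the block of the absolute within-block partial sums (your $V_m$). Writing $U_m\le\E U_m+(U_m-\E U_m)_+$ and using that $\s$ is non-increasing,
$$
\sup_{t\in T}X_t\;\le\;\sup_{M\ge 0}\,\s_{2^M}\sum_{m=0}^M\E U_m\;+\;\sum_{m=0}^\infty\s_{2^m}\,(U_m-\E U_m)_+\,.
$$
Both terms are then controlled by $G$ and $Q$ alone. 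For the first: the block standard deviation satisfies $h_m\le Q\,2^{m/2}\al_{2^m}\le Q^2\,2^{1-m/2}\sum_{k\in B_{m-1}}\al_k$ by two uses of (\ref{regal}), so by (\ref{ESub}) one gets $\E U_m\le 4Q^2\sum_{k\in B_{m-1}}\al_k$, and the means accumulate into the very quantity the hypothesis controls: $\s_{2^M}\sum_{m\le M}\E U_m\le 4Q^2\,\s_{2^M}\sum_{k<2^M}\al_k\le 4Q^2G$. In other words, the drift carried by your checkpoint value $W_a$ --- the mass whose concentration scale does not decay --- is bounded \emph{deterministically} by $G<\infty$; no concentration is ever demanded of it. For the second term, (\ref{tailS}) gives $\E(U_m-\E U_m)_+\le 2h_m$, and $\s_{2^m}h_m\le 2^{1-m/2}Q^2\,\s_{2^m}\sum_{k\le 2^m}\al_k\le 2^{1-m/2}Q^2G$ decays geometrically (exactly the decay you observed for your $V_p$), so the series of expectations converges. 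Altogether $\E\sup_{t\in T}X_t\le 4Q^2G+cQ^2G<\infty$. The idea you were missing: do not isolate the checkpoint value as a single Gaussian maximum to be concentrated; decompose it further into within-block sups and separate means from fluctuations --- the means are paid for by $G$ itself via monotonicity of $\s$, and only the geometrically small fluctuations need Gaussian tails.
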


\begin{proof}
a)
 Let us construct a random sequence $(t_n)_{n\ge 0}$ in $T$ and a sequence of random variables $(\zeta_n)_{n\ge 1}$
by the following inductive procedure. Let $t_0=\ro$. Next, assuming that $t_n$ is constructed,
let $t'$ and $t''$ be the two offsprings of $t_n$.
We let
\[
   \zeta_{n+1}:= \max\{\xi_{t'}, \xi_{t''}\}, \qquad t_{n+1}:= \textrm{argmax} \{\xi_{t'}, \xi_{t''}\}.
\]
It is obvious that $(\zeta_n)$ are i.i.d. random variables with strictly positive expectation.
Our construction yields
 \[
    X_{t_n}= \s_n \left( \al_0 \xi_{\ro}+ \sum_{j=1}^n  \al_j\ \zeta_{j} \right) , \qquad n\ge 1 .
 \]
 It follows that
 \[
  \E \sup_{t\in T} X_t \ge  \sup_{n\ge 1} \E X_{t_n} =C\ \sup_{n\ge 1}\,  \s_n  \, \sum_{j=1}^n  \al_j,
 \]
 where $C:=\E \zeta_j>0$.
Since the assumption "$(X_t)_{t\in T}$ is a.s.~bounded" implies  $\E \sup_{t\in T} X_t<\infty$,
 we obtain (\ref{sumal}).
\medskip

b) Let us assume that $G<\infty$, $Q<\infty$ and prove that $(X_t)_{t\in T}$ is a.s.~bounded.
For any $m\ge 0$ set  $B_m=[2^m,2^{m+1})$ and $J_m:=\{t\in T: |t|\in B_m\}$.
For any $M\ge 0$ and $t\in J_M$ write
\be \label{Ubound1}
  \sum_{v\preceq t} \al(v)\xi_v = \sum_{m=0}^M  \sum_{{v\preceq t \atop v\in J_m }} \al(v)\xi_v
  \le   \sum_{m=0}^M U_m,
\ee
where
\[
  U_m := \sup_{u\in J_m} \left|\sum_{{v\preceq u \atop v\in J_m }} \al(v)\xi_v \right|.
\]
By using that $(\s_k)_{k\ge 0}$ is non--increasing, we infer from (\ref{Ubound1})
for any $M\ge 0$ and $t\in J_M$
\begin{eqnarray*}
 X_t &=&\sigma_t \sum_{v\preceq t} \al(v)\xi_v   \le \sigma_{2^M}  \sum_{m=0}^M U_m
\\
&=& \sigma_{2^M}  \sum_{m=0}^M (\E U_m+ (U_m-\E U_m))
\\
&\le& \sigma_{2^M}  \sum_{m=0}^M (\E U_m+ (U_m-\E U_m)_+)
\\
&\le& \sigma_{2^M}  \sum_{m=0}^M \E U_m
+    \sum_{m=0}^\infty \sigma_{2^m} (U_m-\E U_m)_+.
\end{eqnarray*}
Hence,
\be \label{twoparts}
\sup_{t\in T} X_t \le \sup_{M\ge 0}  \sigma_{2^M}  \sum_{m=0}^M \E U_m
  + \sum_{m=0}^\infty \sigma_{2^m} (U_m-\E U_m)_+ .
\ee
We will use now standard Gaussian techniques in order to evaluate the
quantities on the r.h.s.
Note that on the binary tree
\[
   \#\{J_m\}\le \#\{t:|t|<2^{m+1}\} \le 2^{2^{m+1}}.
\]
Moreover, we have
\[
h_m^2:=   \sup_{u\in J_m}   \sum_{{v\preceq u \atop v\in J_m }} \al(v)^2 \le \sum_{k\in B_m} \al_k^2.
\]
Assuming  (\ref{regal}) to hold, we obtain
\[
  h_m^2\le \sum_{k\in B_m} \al_k^2 \le Q^2 \ 2^m\, \al_{2^m}^2.
\]
Using (\ref{regal}) again we arrive at
\be \label{hmbound}
  h_m\le Q \ 2^{m/2}\, \al_{2^m}\le
  Q^2 \ 2^{1-m/2}\,  \sum_{k\in B_{m-1}} \al_k.
\ee
Now by (\ref{ESub}) it follows that
\[
\E U_m \le \sqrt{\log(2\#\{J_m\})}\, h_m
\le 4 Q^2 \sum_{k\in B_{m-1}} \al_k.
\]
Hence, for any $M$ we get
\[
    \sigma_{2^M}  \sum_{m=0}^M \E U_m
    \le  \sigma_{2^M} 4Q^2 \sum_{m=0}^M \sum_{k\in B_{m-1}} \al_k
    = \sigma_{2^M} 4Q^2 \sum_{k=0}^{2^M-1} \al_k
    \le  4 Q^2 G.
\]
On the other hand, by the Gaussian concentration principle  (\ref{tailS}),
\[
\E(U_m-\E U_m)_+  = \int_0^\infty \P(U_m-\E U_m >r) \d r \le
\int_0^\infty \exp(-r^2/2h_m^2) \d r \le 2 h_m .
\]
From (\ref{hmbound}) it follows that
\begin{eqnarray*}
\sigma_{2^m} \E(U_m-\E U_m)_+
&\le& 2 \sigma_{2^m} h_m
\\
&\le& 2 \sigma_{2^m}  Q^2 \  2^{1-m/2}\,  \sum_{k\in B_{m-1}} \al_k.
\\
&\le&  2^{2-m/2} Q^2 \sigma_{2^m}  \sum_{k\le 2^{m}} \al_k
\\
&\le&  2^{2-m/2} Q^2 G.
\end{eqnarray*}
By plugging this into (\ref{twoparts}), we arrive at
\[
\E \sup_{t\in T} X_t \le \sup_{M\ge 0}  \sigma_{2^M}  \sum_{m=0}^M \E U_m
  + \sum_{m=0}^\infty \E \sigma_{2^m} (U_m-\E U_m)_+
\le  4 Q^2 G + Q^2 G \sum_{m=0}^\infty 2^{2-m/2}
<\infty
\]
and $(X_t)_{t\in T}$ is a.s.~bounded.
\end{proof}
\medskip

Let us start with a first example where Theorem \ref{t61} applies.
Take the binary tree $T$ and suppose that either $\al(t)=(|t|+1)^{-1}$ and $\s(t)\equiv 1$ or that
$\al(t)\equiv 1$ and $\s(t)=(|t|+1)^{-1}$. Note these weights lead to critical cases, namely,
we have  $\log N(T,d,\eps)\approx \eps^{-2}$ for both pairs of weights.

\begin{cor}
\label{bin}
The process
$$
X'_t:=(|t|+1)^{-1}\sum_{v\pq t}\xi_v\,,\quad t\in T\,,
$$
is a.s.~bounded while
\[
   X''_t:=\sum_{v\pq t}(|v|+1)^{-1}\xi_v\,,\quad t\in T\,,
\]
is a.s.~unbounded.
\end{cor}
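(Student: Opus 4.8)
The plan is to observe that both processes fall directly under Theorem \ref{t61}: each is indexed by the binary tree with homogeneous weights, and in each case the relevant sequence $(\al_k)$ is non-increasing. For $X'$ we have $\al_k\equiv 1$ and $\s_k=(k+1)^{-1}$, while for $X''$ we have $\al_k=(k+1)^{-1}$ and $\s_k\equiv 1$. Since a non-increasing sequence satisfies the regularity assumption (\ref{regal}) trivially (with $Q=1$), the characterization in Theorem \ref{t61} b) applies to $X'$, and the necessary condition of Theorem \ref{t61} a) applies to $X''$ (the latter in fact requiring no regularity at all). Consequently the whole argument reduces to computing, in each case, the single quantity
$$
G=\sup_n\;\s_n\sum_{k=1}^n\al_k
$$
from (\ref{sumal}).

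For $X'$ I would use $\sum_{k=1}^n\al_k=n$, so that $G=\sup_n n/(n+1)=1<\infty$; Theorem \ref{t61} b) then yields that $X'$ is a.s.~bounded. For $X''$ I would instead compute $\sum_{k=1}^n\al_k=\sum_{k=1}^n(k+1)^{-1}$, a tail of the harmonic series, which diverges like $\log n$ as $n\to\infty$; combined with $\s_n\equiv 1$ this gives $G=\infty$. The necessary condition of Theorem \ref{t61} a) then shows that $X''$ cannot be a.s.~bounded. The one point worth spelling out is that "not a.s.~bounded'' coincides with "a.s.~unbounded'', which is the assertion of the corollary: this is the standard zero--one law for Gaussian suprema, by which $\sup_{t\in T}|X''_t|$ is almost surely either finite or infinite, so the failure of (\ref{bounded}) forces $\sup_{t\in T}|X''_t|=\infty$ a.s.

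I do not expect a genuine obstacle here, since all the substance is contained in Theorem \ref{t61} and what remains is the elementary fact that the harmonic sum diverges whereas the constant-weight sum $n$ is exactly compensated by the decay $\s_n=(n+1)^{-1}$. The instructive feature to emphasize in the write-up is the contrast between the two cases: both weight pairs share the same product $\al(t)\s(t)=(|t|+1)^{-1}$, yet one process is bounded and the other is not. This makes the corollary a concrete witness to the phenomenon announced in the introduction and after Proposition \ref{al_s_1_als}, namely that boundedness of $X$ is not governed by the product $\al\,\s$ alone.
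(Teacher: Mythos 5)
Your proof is correct and follows essentially the same route as the paper: the paper likewise deduces both assertions from Theorem \ref{t61}, observing that (\ref{sumal}) and (\ref{regal}) hold for $X'$ while (\ref{sumal}) fails for $X''$. Your extra remark invoking the Gaussian zero--one law to pass from ``not a.s.~bounded'' to ``a.s.~unbounded'' is a detail the paper leaves implicit.
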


\begin{proof}
In the first case (\ref{sumal}) and (\ref{regal}) are satisfied while in the second one
(\ref{sumal}) fails. Thus both assertions follow by Theorem \ref{t61}.
\end{proof}

\begin{remark}
\rm
The preceding corollary is of special interest because $\al(t)\s(t)=(|t|+1)^{-1}$
in both cases. Consequently, the boundedness of the process $X$ cannot be described by the
behavior of $\al\s$.
This is in contrast to the main results about metric entropy in \cite{LifLin10} which
only depend on this product behavior.
\end{remark}
\medskip

Theorem \ref{t61} does not apply in the case of rapidly increasing sequences $(\al_k)_{k\ge 0}$ because
(\ref{regal}) fails for them. The next theorem fills this gap.
\begin{thm} \label{t62}
a) If $X=(X_t)_{t\in T}$ is a.s.~bounded, then
\be \label{sumal2}
    G_1 :=\sup_n \sup_{m\le n} \ \s_n \,  \sqrt{m} \left(\sum_{k=m}^n \al_k^2\right)^{1/2} <\infty.
\ee

b) If
\be \label{sumal2a}
     G_2 :=\sup_n  \ \s_n \,  \sqrt{n} \left(\sum_{k=0}^n \al_k^2\right)^{1/2} <\infty,
\ee
then
$(X_t)_{t\in T}$ is a.s.~bounded.

c) Moreover, if $(\al_k)_{k\ge 0}$ is non--decreasing,
then the conditions $(\ref{sumal2})$ and $(\ref{sumal2a})$ are equivalent,
thus $X$ is a.s.~bounded if and only if either of them  holds.
\end{thm}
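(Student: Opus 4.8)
The plan is to prove the three assertions of Theorem \ref{t62} in order, reusing the machinery already developed for Theorem \ref{t61}.

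\textbf{Part a).} The goal is to extract a lower bound on $\E\sup_{t\in T}X_t$ using independent Gaussian blocks. Fix $n$ and $m\le n$. I would exploit the exponential branching of the binary tree: along levels $m,m+1,\dots,n$ there are roughly $2^m$ disjoint subtrees rooted at level $m$, and within each we may choose an independent increment. Concretely, for each vertex $u$ at level $m$ consider the variable $\eta_u:=\sum_{m\pc v\pq t_u,\ v\in[u,\cdot]}\al(v)\xi_v$ along a fixed branch descending from $u$ to level $n$; these are independent across the $2^m$ choices of $u$ and each has variance $\sum_{k=m+1}^n\al_k^2$ (up to the endpoint conventions). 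Since the binary tree has $2^m$ vertices at level $m$, the Sudakov bound (\ref{sudakov}) gives $\E\max_u \eta_u\ge c\sqrt{m\log 2}\,(\sum_{k=m}^n\al_k^2)^{1/2}$. Because $\s$ is non--increasing, evaluating $X$ at the corresponding level--$n$ vertex and subtracting the level--$m$ value (as in (\ref{defeta})) bounds $\E\sup X_t$ from below by a constant times $\s_n\sqrt m(\sum_{k=m}^n\al_k^2)^{1/2}$. Taking suprema over $m\le n$ and $n$ yields (\ref{sumal2}). The main care here is aligning the summation range and the independence structure so that exactly $2^m$ independent competitors survive.

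\textbf{Part b).} Assuming $G_2<\infty$, I would mimic the sufficiency argument of Theorem \ref{t61}b verbatim, replacing the regularity--based estimate (\ref{hmbound}) for $h_m$. The decomposition (\ref{twoparts}) of $\sup_t X_t$ into a ``mean'' part $\sup_M\s_{2^M}\sum_{m\le M}\E U_m$ and a ``fluctuation'' part $\sum_m\s_{2^m}(U_m-\E U_m)_+$ is purely structural and survives unchanged. For the blocks we again have $h_m^2\le\sum_{k\in B_m}\al_k^2\le\sum_{k=0}^{2^{m+1}}\al_k^2$ and $\#\{J_m\}\le 2^{2^{m+1}}$, so (\ref{ESub}) gives $\E U_m\le\sqrt{\log(2\#\{J_m\})}\,h_m\le C\,2^{m/2}(\sum_{k\le 2^{m+1}}\al_k^2)^{1/2}$. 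Multiplying by $\s_{2^M}$ and summing is where $G_2$ enters: one checks $\s_{2^M}\sum_{m\le M}2^{m/2}(\sum_{k\le2^{m+1}}\al_k^2)^{1/2}$ is dominated by a geometric series times $G_2$, using that $\s_{2^M}\le\s_{2^m}$ and that $2^{m/2}\asymp\sqrt{2^m}$ matches the $\sqrt n$ in (\ref{sumal2a}). The fluctuation part is controlled exactly as before via the concentration bound (\ref{tailS}), which gives $\E(U_m-\E U_m)_+\le 2h_m$ and hence a summable series $\sum_m\s_{2^m}h_m\lesssim G_2\sum_m 2^{-m/2}$. Thus $\E\sup_t X_t<\infty$.

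\textbf{Part c).} When $(\al_k)$ is non--decreasing it is clear that $G_1\le G_2$, since (\ref{sumal2a}) is the $m=0$ (or $m=n$) instance of (\ref{sumal2}) up to constants. For the reverse, I would show $G_2\lesssim G_1$ by choosing $m\approx n/2$ in (\ref{sumal2}): monotonicity of $\al_k$ gives $\sum_{k=0}^n\al_k^2\le 2\sum_{k=n/2}^n\al_k^2$ (the upper half of the range carries at least half the mass when terms increase), while $\sqrt n\asymp\sqrt{n/2}$, so $\s_n\sqrt n(\sum_{k=0}^n\al_k^2)^{1/2}\le C\,\s_n\sqrt{m}(\sum_{k=m}^n\al_k^2)^{1/2}\le C\,G_1$ with $m=\lceil n/2\rceil$. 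Combined with parts a) and b), equivalence of (\ref{sumal2}) and (\ref{sumal2a}) and the stated characterization of boundedness follow.

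I expect the genuine obstacle to be part a): producing cleanly $2^m$ \emph{independent} centered Gaussian competitors of the right variance from the tree structure, with the $\s$--weights handled so that the increments are truly comparable to $X$ (rather than losing a factor through the $\lambda^-$ correction). Parts b) and c) are, by contrast, adaptations of already--established estimates and elementary monotonicity manipulations.
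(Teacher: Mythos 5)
Parts a) and c) of your proposal are correct and essentially coincide with the paper's own argument: in a), your ``one fixed descending branch per level--$m$ vertex'' is exactly the paper's choice of a map $L:\{t:|t|=m\}\to\{t:|t|=n\}$ with $t\pq L(t)$; the increments $Y_t=X_{L(t)}-\frac{\s_n}{\s_m}\,X_t$ are independent with variance $\s_n^2\sum_{k=m}^n\al_k^2$, Sudakov's bound (\ref{sudakov}) applies to the $2^m$ of them, and $\s_n/\s_m\le 1$ gives $\max_{|t|=m}Y_t\le 2\sup_{t\in T}|X_t|$, so the $\lambda^-$--type correction you worried about costs only a factor $2$. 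In c), your halving argument with $m=\lceil n/2\rceil$ is the paper's proof verbatim (and $G_1\le G_2$ holds termwise for any $(\al_k)$, no monotonicity needed).

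Part b), however, contains a genuine gap: the estimate you propose to ``check'' is false. The quantity $G_2$ only controls products in which $\s_n$ is matched with $\al_k$ for $k\le n$, whereas your block argument pairs $\s_{2^m}$, the weight at the \emph{beginning} of the block $B_m=[2^m,2^{m+1})$, with $\al_k$ for $k$ running to the \emph{end} of that block; without the regularity hypothesis (\ref{regal}) these are incomparable, and (\ref{regal}) is exactly what Theorem \ref{t62} must do without. Concretely, take $\al_k=2^k$ and $\s_n\asymp n^{-1/2}2^{-n}$ (the case $b=0$ of Corollary \ref{c2}): then $G_2<\infty$, so $X$ is a.s.~bounded, yet with $h_M^2=\sum_{k\in B_M}\al_k^2$ and $U_M$ as in the proof of Theorem \ref{t61} one has
\[
\s_{2^M}\,2^{M/2}\,h_M\;\asymp\;2^{-M/2}2^{-2^M}\cdot 2^{M/2}\cdot 2^{2^{M+1}}\;=\;2^{2^M}\;\longrightarrow\;\infty\,,
\]
and since the block $J_M$ contains $2^{2^M}$ disjoint full branches, each carrying an independent Gaussian sum of variance $h_M^2$, the bound (\ref{sudakov}) gives $\E U_M\ge c\,2^{M/2}h_M$. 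Hence already the single term $m=M$ of your ``mean part'' satisfies $\s_{2^M}\E U_M\to\infty$, and your fluctuation estimate $\sum_m\s_{2^m}h_m\lesssim G_2\sum_m 2^{-m/2}$ fails for the same reason. The point is structural: the decomposition (\ref{twoparts}) bounds $\sup_{t\in T}X_t$ by a quantity that is genuinely infinite precisely in the regime of rapidly growing $(\al_k)$ — the regime Theorem \ref{t62} is designed for, which is why the paper remarks that Theorem \ref{t61} ``does not apply'' there.

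The idea your proposal is missing is that the paper's proof of b) is level--wise rather than block--wise. Put $S_n:=\max_{|t|=n}X_t$. Then (\ref{ESub}) gives the \emph{uniform} bound $\E S_n\le\sqrt{2\log(2^n)}\,\s_n\left(\sum_{k=0}^n\al_k^2\right)^{1/2}\le 2G_2$ — here $\s_n$ is matched with exactly the levels $k\le n$, so $G_2$ applies directly — while the variance at level $n$ satisfies $\E X_t^2\le G_2^2/n$ for $|t|=n$. The concentration inequality (\ref{tailS}) then yields $\P(S_n\ge 2G_2+r)\le\exp(-r^2n/2G_2^2)$, which is geometrically summable in $n$, and a union bound over all levels gives
\[
\pr{\sup_{t\in T}X_t>2G_2+r}\;\le\;\P(X_\ro\ge r)+\frac{\exp(-r^2/2G_2^2)}{1-\exp(-r^2/2G_2^2)}\;\longrightarrow\;0\quad\text{as }r\to\infty\,,
\]
so $X$ is a.s.~bounded. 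Replacing your dyadic--block estimate by this level--wise argument repairs part b); the rest of your proposal can stand as written.
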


\begin{proof}
a) Let us fix a pair of integers $m\le n$. Take any mapping $L:\{t:|t|=m\}\to \{t:|t|=n\}$
such that $t \preceq  L(t)$ for all $t$.
Consider
\[
  Y_t:=\s_n \sum_{t \preceq s \preceq L(t)} \al_{|s|} \xi_s, \qquad |t|=m.
\]
Notice that the $(Y_t)_{|t|=m}$ are independent and that
\[
   \E Y_t^2 =\s_n^2 \sum_{m \le k\le n} \al_k^2 \, .
\]
By (\ref{sudakov}) it follows
\[
  \E \max_{|t|=m} Y_t \ge c \sqrt{\log(2^m)}\ \s_n \left( \sum_{m\le k\le n} \al_k^2\right)^{1/2}
  =  \tilde c \ \sqrt{m}\ \s_n \left( \sum_{m\le k\le n} \al_k^2\right)^{1/2}.
\]
On the other hand
\[
   Y_t= X_{L(t)}-\frac{\s_n}{\s_m} \, X_t \, ,
\]
hence
\[
    \max_{|t|=m} Y_t \le 2 \sup_{t\in T} |X_t|.
\]
We arrive at
\[
   2 \ \E \sup_{t\in T} |X_t|
   \ge \tilde c \ \sqrt{m}\ \s_n \left( \sum_{m\le k\le n} \al_k^2\right)^{1/2},
\]
and achieve the proof of a) by taking the supremum over $m$ and $n$.
\medskip

b) Let $S_n:=\max_{|t|=n} X_t$. By (\ref{ESub}) we have
\be \label{ESnub}
  \E S_n \le \sqrt{2\log(2^n)} \ \s_n  \left( \sum_{k=0}^n \al_k^2\right)^{1/2} \le 2 G_2.
\ee
We also have
\be \label{EX2ub}
 \E X_t^2= \s_n^2 \sum_{k=0}^n \al_k^2 \le \frac{G_2^2}{n}, \qquad |t|=n.
\ee
Since $\sup_{t\in T} X_t =\sup_n S_n$, for any $r>0$ it follows that
\begin{eqnarray*}
  \P\left( \sup_{t\in T} X_t> 2G_2+r\right)
  &\le& \sum_{n=0}^\infty \P\left( S_n \ge 2G_2+r\right)
\\
  &\le& \sum_{n=0}^\infty \P\left( S_n \ge \E S_n +r\right)
  \qquad (\textrm{by}\ (\ref{ESnub}) \, )
\\
   &\le& \P\left( S_0 \ge \E S_0 +r\right)  +
         \sum_{n=1}^\infty \exp \left(-\frac{r^2 n}{2 G_2^2} \right)
   \qquad (\textrm{by}\ (\ref{EX2ub})\ \textrm{and}\ (\ref{tailS}) \, )
\\
   &=&  \P\left( X_\ro \ge r \right) +
   \frac {\exp \left(-\frac{r^2}{2G_2^2}\right)}{1- \exp \left(-\frac{r^2}{2G_2^2}\right)} \to 0,  \qquad \textrm{as}\ r\to \infty.
\end{eqnarray*}
It follows that $(X_t)_{t\in T}$ is a.s.~bounded. Thus assertion b) is proved.
\medskip

c) The inequality $G_1\le G_2$ is obvious for any $(\al_k)_{k\ge 0}$.
We only need to show that a bound in the opposite direction holds,
too. Let
\[
  m_n:=\left\{\begin{array}{ccc}
  \frac n2 &:& n\ \textrm{even}\\
  \frac {n+1}2 &:& n\ \textrm{odd.}
  \end{array}
  \right.
\]
Assuming that $(\al_k)_{k\ge 0}$ is non-decreasing, we have
\[
  \sum_{m_n \le k \le n} \al_k^2 \ge  \sum_{0\le k <m_n} \al_k^2 ,
\]
hence
\[
 2 \sum_{m_n \le k \le n} \al_k^2 \ge  \sum_{0\le k \le n} \al_k^2 .
\]
It follows that
\[
 G_1 \ge \sup_n  \ \s_n \,  \sqrt{m_n} \left(\sum_{m_n \le k\le n} \al_k^2\right)^{1/2}
 \ge \frac 12\  \sup_n  \ \s_n \,  \sqrt{n}  \sum_{0\le k \le n} \al_k^2
 =\frac {G_2}{2}.
\]
\end{proof}

\begin{cor}
\label{c2}
Let $\al_k=k^b\, 2^k$ for some $b\in\R$. Then $(X_t)_{t\in T}$ is a.s.~bounded if
and only if
\[
   \sup_n  \ \s_n \, n^{1/2+b}\,  2^n <\infty.
\]
\end{cor}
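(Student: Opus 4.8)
The plan is to derive Corollary \ref{c2} directly from Theorem \ref{t62} by computing the two sums $G_1$ and $G_2$ for the explicit weights $\al_k=k^b\,2^k$. The key point is that since $(\al_k)_{k\ge 0}$ is (eventually) non--decreasing for $2^k$ dominates, part c) of Theorem \ref{t62} applies, so $X$ is a.s.~bounded if and only if $G_2<\infty$, and it suffices to evaluate $G_2$.

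First I would estimate the inner sum $\sum_{k=0}^n \al_k^2=\sum_{k=0}^n k^{2b}\,4^k$. The essential observation is that this geometric--type sum is dominated by its last term up to a constant factor depending only on $b$: indeed $\sum_{k=0}^n k^{2b}4^k \approx n^{2b}\,4^n$, since for a non--decreasing (in $k$) summand growing at exponential rate the tail is comparable to the final term. More precisely, I would show $c_b\, n^{2b} 4^n \le \sum_{k=0}^n k^{2b}4^k \le C_b\, n^{2b}4^n$ for constants $0<c_b\le C_b<\infty$; the upper bound follows by factoring out the last term and bounding the resulting partial geometric series $\sum_{j\ge 0}(1-j/n)^{2b}4^{-j}$, the lower bound by retaining only the top term. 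Substituting, I obtain
\[
   \s_n\,\sqrt{n}\Big(\sum_{k=0}^n \al_k^2\Big)^{1/2}\approx \s_n\,\sqrt{n}\;n^{b}\,2^n=\s_n\,n^{1/2+b}\,2^n\,.
\]

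Taking the supremum over $n$ then gives $G_2<\infty$ if and only if $\sup_n \s_n\,n^{1/2+b}\,2^n<\infty$, which is exactly the stated condition. Since the non--decreasing weights of the form $2^k$ force part c) of Theorem \ref{t62} to hold (so boundedness is equivalent to $G_2<\infty$), this finishes the proof.

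The only mildly delicate point will be handling the small--$n$ regime: for $b<0$ the factor $k^{2b}$ is singular at $k=0$, and the summand is not monotone near the origin. This is harmless, though: one can simply start the sum at $k=1$ (the single term $\al_0$ only affects a bounded finite range of $n$ and cannot influence whether the supremum is finite, since $\s_n n^{1/2+b}2^n\to\infty$ forces divergence from large $n$ anyway), and absorb any bounded initial discrepancy into the constants $c_b,C_b$. Thus the comparison $\sum_{k\le n}\al_k^2\approx n^{2b}4^n$ is valid for all large $n$, which is all that is needed for the supremum condition. No genuine obstacle arises; the content is entirely the geometric--sum asymptotics, with the equivalence of $G_1$ and $G_2$ supplied gratis by Theorem \ref{t62} c).
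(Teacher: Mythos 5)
Your route is the natural one and matches what the paper intends (it gives no separate proof of Corollary \ref{c2}; it is meant to follow from Theorem \ref{t62} plus the geometric--sum asymptotics), and your estimate $\sum_{k\le n}k^{2b}4^k\approx n^{2b}4^n$, including the handling of the term $k=0$ and of small $n$, is correct. The gap is in how you close the equivalence: you rest the necessity direction on part c) of Theorem \ref{t62}, whose hypothesis is that $(\al_k)_{k\ge 0}$ is non--decreasing, and for $b<-1$ this hypothesis genuinely fails: e.g.\ for $b=-2$ one has $\al_1=2>1=\al_2>8/9=\al_3$. The sequence is only \emph{eventually} non--decreasing, which is not what c) assumes, and your patch (absorbing the initial terms into the constants $c_b,C_b$) repairs the computation of $G_2$, not the hypothesis of c); to use c) for such $b$ you would have to rerun its proof, checking $\sum_{m_n\le k\le n}\al_k^2\ge c\sum_{0\le k<m_n}\al_k^2$ for large $n$, which you do not do. (Your side remark that ``$\s_n n^{1/2+b}2^n\to\infty$ forces divergence'' is also false in general -- take $\s_n=4^{-n}$ -- but it is not needed anywhere.)

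The gap is easy to close, and in a way that avoids part c) and all monotonicity discussion. For necessity, use part a) alone: taking $m=n$ in the definition $(\ref{sumal2})$ of $G_1$ gives
$$
G_1\ \ge\ \s_n\,\sqrt{n}\,\al_n\ =\ \s_n\, n^{1/2+b}\,2^n \qquad\mbox{for every } n\ge 1\,,
$$
so a.s.~boundedness yields $\sup_n \s_n n^{1/2+b}2^n\le G_1<\infty$ directly. For sufficiency, combine part b) with your upper bound: $\sup_n \s_n n^{1/2+b}2^n<\infty$ implies $G_2\le C_b\,\sup_n \s_n n^{1/2+b}2^n+\max_{n<n_0}\s_n\sqrt n\bigl(\sum_{k\le n}\al_k^2\bigr)^{1/2}<\infty$, hence $X$ is a.s.~bounded. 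This yields the corollary for all $b\in\R$; with that replacement your argument is complete.
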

\begin{remark}
\rm
Note that criterion (\ref{sumal}) from Theorem \ref{t61} fails to work in that case. Moreover, letting
$b=-\gamma$ with $1/2<\gamma< 1$ and $\s_n=2^{-n}$, by Corollary \ref{c2} the corresponding process is bounded
although $\al(t)\s(t)\ge |t|^{-\gamma}$ for $t\in T$. This shows that
the second part of Corollary \ref{c1} is no longer valid for non--constant weights $\s$.
\end{remark}
\medskip

Another example where Theorem \ref{t61} does not apply is as follows.

\begin{cor} \label{cor65}
Let $\al_k^2=\exp((\log k)^\beta)$ with $\beta>1$.
Then $(X_t)_{t\in T}$ is a.s.~bounded if and only if
\[
   \sup_n  \ \s_n \  \frac{n}{(\log n)^{\frac{\beta-1}{2}}} \  \exp((\log n)^\beta/2)  <\infty.
\]
\end{cor}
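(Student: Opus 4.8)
The plan is to reduce the statement to part c) of Theorem~\ref{t62} and then to carry out the asymptotic analysis of the partial sums $S_n:=\sum_{k=0}^n\al_k^2$.

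First I would note that the sequence $\al_k^2=\exp((\log k)^\beta)$ is non--decreasing for $k\ge 1$, since $k\mapsto(\log k)^\beta$ is increasing when $\beta>0$; the value at $k=0$ plays no role and may be fixed by convention so as to preserve monotonicity. Thus part c) of Theorem~\ref{t62} applies and reduces the problem to deciding when
\[
   G_2=\sup_n\s_n\,\sqrt n\,\Big(\sum_{k=0}^n\al_k^2\Big)^{1/2}<\infty .
\]
It therefore suffices to show that, with constants uniform in $n$,
\[
   S_n=\sum_{k=0}^n\exp\big((\log k)^\beta\big)\asymp\frac{n}{(\log n)^{\beta-1}}\,\exp\big((\log n)^\beta\big).
\]

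The heart of the argument is this comparison. Since $f(x):=\exp((\log x)^\beta)$ is eventually increasing, $S_n$ is squeezed between $\int_1^n f$ and $\int_2^{n+1}f$, so up to uniform constants (the finitely many small--$k$ terms being bounded and negligible) it is enough to evaluate $\int^{\,n}\exp((\log x)^\beta)\d x$. The substitution $u=\log x$, $\d x=\ex^{u}\d u$, turns this into $\int^{\log n}\exp(u^\beta+u)\,\d u$. For $\beta>1$ the exponent $g(u)=u^\beta+u$ has $g'(u)=\beta u^{\beta-1}+1\to\infty$ with $g''(u)/g'(u)^2\to 0$, so a single integration by parts yields
\[
   \int^{U}\ex^{g(u)}\,\d u\sim\frac{\ex^{g(U)}}{g'(U)}\sim\frac{\ex^{U}\exp(U^\beta)}{\beta\,U^{\beta-1}},\qquad U\to\infty .
\]
Putting back $U=\log n$ reproduces exactly $\dfrac{n}{\beta(\log n)^{\beta-1}}\exp((\log n)^\beta)$, which is the claimed order.

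Finally I would feed this estimate into $G_2$: from the displayed comparison we obtain
\[
   \sqrt n\,S_n^{1/2}\asymp\frac{n}{(\log n)^{(\beta-1)/2}}\,\exp\big((\log n)^\beta/2\big),
\]
again with constants independent of $n$, whence $G_2<\infty$ if and only if $\sup_n\s_n\,n(\log n)^{-(\beta-1)/2}\exp((\log n)^\beta/2)<\infty$, which is the asserted criterion. The main obstacle is precisely the uniformity of the integral asymptotics: because $f(k)/f(k-1)=\exp((\log k)^\beta-(\log(k-1))^\beta)\to 1$, the series is not dominated geometrically by its last term, and the genuine $(\log n)^{\beta-1}$ gain produced by the integration by parts must be controlled with upper and lower constants valid for all large $n$.
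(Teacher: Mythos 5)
Your proposal is correct and takes essentially the same route as the paper: both reduce the statement to part c) of Theorem \ref{t62} (using that the $\al_k$ are non--decreasing) and then establish $\sum_{k=0}^n\al_k^2\sim \frac{n}{\beta(\log n)^{\beta-1}}\,\exp((\log n)^\beta)$ by comparing the sum with $\int_1^n\exp((\log u)^\beta)\,du$. The paper evaluates this integral via the substitution $z=(\log u)^\beta$ whereas you use $u=\log x$ followed by integration by parts, a purely cosmetic difference; your version in fact spells out the monotonicity and uniformity details that the paper dismisses as an ``easy calculation''.
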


\begin{proof} Easy calculation shows that
\begin{eqnarray*}
\sum_{k=0}^n \al_k^2 &\sim& \int_1^n \exp((\log u)^\beta) du
= \int_0^{(\log n)^\beta} \exp(z+z^{1/\beta})\frac{dz}{\beta z^{1-1/\beta}}
\\
&\sim&
 \frac{n}{\beta (\log n)^{\beta-1}} \  \exp((\log n)^\beta).
\end{eqnarray*}
An application of Theorem \ref{t62} yields the result.
\end{proof}
\medskip

Our message is that Theorems \ref{t61} and \ref{t62} should {\it jointly} cover any reasonable case.
Let us illustrate this by the following example.
Recall that by the first part of Corollary \ref{c1}, if $T$
is the binary tree and
$\al(t)\s(t)\le c\,|t|^{-\gamma}$ for some $\gamma>1$, then the generated process $X$ is a.s.~bounded.
For homogeneous (level--dependent) weights this means that $\al_k\s_k\le c\,k^{-\gamma}$ for some
$\gamma>1$ yields the a.s.~boundedness of $X$. Let us see how this fact is related to Theorems \ref{t61}
and \ref{t62}.

Essentially, we have the following
\begin{itemize}
\item If $(\al_k)_{k\ge 0}$ is decreasing, then
\[
  \sigma_n \sum_{k\le n} \al_k \le  \sigma_n \sum_{k\le n} \frac{c\ k^{-\gamma}}{\s_k} \le
  \sum_{k\le n} c \ k^{-\gamma} \le   c \ \sum_{k=1}^\infty  k^{-\gamma},
\]
hence (\ref{sumal}) and  (\ref{regal}) hold and Theorem \ref{t61} yields the boundedness.

\item If $(\al_k)_{k\ge 0}$ is increasing, then
\[
  \sigma_n  \sqrt{n} \left(\sum_{k\le n} \al_k^2\right)^{1/2}
  \le \sigma_n  \sqrt{n} \left( n \al_n^2\right)^{1/2}
  = \sigma_n \al_n \ n \le c\ n^{1-\gamma},
\]
thus (\ref{sumal2a}) holds even for $\gamma\ge 1$,
and Theorem \ref{t62} yields the boundedness.
\end{itemize}
\medskip

Finally let us relate the results in Theorems \ref{t61} and \ref{t62} to those about
compactness properties of $(T,d)$ with $d$ defined in (\ref{defd}). Here we have the following partial
result.

\begin{prop}
\label{prop3}
The expression $G_1$ in $(\ref{sumal2})$  is finite if and only if there is
a constant $c>0$ such that
\be
\label{d1}
d(t,s)\le c\,|t|^{-1/2}
\ee
for all $t,s\in T$ with $t\pc s$.
\end{prop}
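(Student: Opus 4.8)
The plan is to prove both directions by relating the quantity $G_1$ from $(\ref{sumal2})$ to the metric $d$ via its defining formula $(\ref{defd2})$. For a binary tree with homogeneous weights, whenever $t \pc s$ with $|t| = m$ and $|s| = n$ (so $m < n$), the order interval $(t,s]$ consists of vertices $v$ with $m < |v| \le n$, and along this interval the weights depend only on the level. So for any $t \pc r \pq s$ with $|r| = k$ we have
\[
 \s(r)\left(\sum_{t\pc v\pq r}\al(v)^2\right)^{1/2} = \s_k\left(\sum_{j=m+1}^{k}\al_j^2\right)^{1/2},
\]
and therefore $d(t,s) = \max_{m < k \le n} \s_k \bigl(\sum_{j=m+1}^{k}\al_j^2\bigr)^{1/2}$. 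The point is that $d(t,s)$ depends only on the two levels $m = |t|$ and $n = |s|$, not on the particular vertices.

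First I would prove the forward direction, that $G_1 < \infty$ implies $(\ref{d1})$. Fix $t \pc s$ with $|t| = m$, and let $r$ with $|r| = k$ be the maximizer in the displayed formula for $d(t,s)$. Since $(\s_k)$ is non--increasing, for the maximizing level $k$ I would bound
\[
 \s_k\left(\sum_{j=m+1}^{k}\al_j^2\right)^{1/2} \le \s_k\left(\sum_{j=m}^{k}\al_j^2\right)^{1/2} \le \frac{1}{\sqrt{m}}\cdot \s_k\,\sqrt{m}\left(\sum_{j=m}^{k}\al_j^2\right)^{1/2} \le \frac{G_1}{\sqrt{m}},
\]
where the last inequality is exactly the definition of $G_1$ applied to the pair $(m,k)$. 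This gives $d(t,s) \le G_1\,|t|^{-1/2}$, so $(\ref{d1})$ holds with $c = G_1$. The only care needed here is the edge case $m = 0$, which I would handle by noting that $\al(\ro)$ never enters $d$, so the relevant sums start at level $1$; choosing the constant appropriately (or restricting to $m \ge 1$ and treating the root separately) absorbs this.

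For the converse, I would assume $(\ref{d1})$ holds with constant $c$ and deduce $G_1 < \infty$. Given levels $m \le n$, pick any vertices $t \pc s$ with $|t| = m$ and $|s| = n$ (possible on the binary tree). Then $(\ref{d1})$ gives $d(t,s) \le c\,m^{-1/2}$, and taking $k = n$ inside the max defining $d(t,s)$ yields
\[
 \s_n\left(\sum_{j=m+1}^{n}\al_j^2\right)^{1/2} \le d(t,s) \le c\,m^{-1/2}.
\]
Multiplying by $\sqrt{m}$ and taking the supremum over $m \le n$ gives a bound on the version of $G_1$ with sums running from $m+1$; since $G_1$ as defined in $(\ref{sumal2})$ sums from $k=m$, I would note that dropping the single term $\al_m^2$ changes the sum by a controlled amount, and use the uniform bound $\s_n \sqrt{m}\,\al_m \le \s_m\sqrt{m}\,\al_m$ (obtained from the $n=m$, interval $\{m\}$ case, again via $(\ref{d1})$) to reinstate that term. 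Thus $G_1 \le c'$ for some constant depending only on $c$.

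The main obstacle I anticipate is purely bookkeeping: the off--by--one discrepancy between the summation index in the definition of $G_1$ (starting at $k = m$) and the index naturally appearing in the order interval $(t,s]$ for $d$ (starting at $m+1$), together with the special status of the root level. Neither is a genuine difficulty — both are resolved by including or excluding a single boundary term and adjusting the absolute constant $c$ — but they must be stated carefully so that the equivalence $G_1 < \infty \Leftrightarrow (\ref{d1})$ is clean. I expect no deeper estimate than the elementary reshuffling above.
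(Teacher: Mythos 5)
Your proposal is correct and follows essentially the same route as the paper's proof: both exploit that for homogeneous weights $d(t,s)$ depends only on the levels $|t|,|s|$, then prove one direction by applying the $G_1$-bound at the maximizing level and the other by evaluating the max at $k=n$, with the off-by-one term $\al_m^2$ reinstated via monotonicity of $\s$ (the paper packages this last bookkeeping step into an unproved remark reducing $G_1<\infty$ to the sum starting at $m+1$, which you instead spell out explicitly).
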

\begin{proof}
First note that in the case of homogeneous weights we get
$$
d(t,s)=\max_{|t|<l\le |s|} \s_l\left(\sum_{k=|t|+1}^l\al_k^2\right)^{1/2}\;.
$$
Next we remark that $G_1<\infty$ if and only if there is a constant $c>0$ such that
\be
\label{G1}
\s_n\left(\sum_{k=m+1}^n \al_k^2\right)^{1/2}\le c\,m^{-1/2}
\ee
for all $0\le m<n<\infty$.

Suppose now that (\ref{d1}) holds and take integers $m<n$. Next choose two elements $t,s\in T$ with $t\pc s$  such that
$m=|t|$ and $n=|s|$. Note that (\ref{d1}) implies
$$
\s_n\left(\sum_{k=m+1}^n \al_k^2\right)^{1/2}\le d(t,s)\le c\,|t|^{-1/2}= c\, m^{-1/2}
$$
which proves (\ref{G1}).

Conversely, assume (\ref{G1}) and take any two elements $t\pc s$ in $T$. Furthermore, let $v\in(t,s]$ be a node
where
$$
d(t,s)= \s_{|v|}\left(\sum_{k=|t|+1}^{|v|}\al_k^2\right)^{1/2}\;.
$$
Applying (\ref{G1}) with $m:=|t|$ and $n:=|v|$ leads to
$$
d(t,s) \le c\, m^{-1} =c\,|t|^{-1/2}
$$
as claimed. This completes the proof.
\end{proof}
\medskip

\begin{remark}
\rm
Clearly (\ref{d1}) implies $\log N(T,d,\eps)\le c\,\eps^{-2}$ as we already know by combining Theorems
\ref{DS} and \ref{t62}. But it says a little bit more. Namely, an $\eps$--net giving this
order may be chosen as $\set{t\in T : |t|\le c\,\eps^{-1/2}}$ for a certain $c>0$. Of course, this
heavily depends on the fact that we deal with homogeneous weights.
\end{remark}
\bigskip

{\bf Acknowledgement.} The research was supported by the RFBR-DFG grant 09-01-91331 "Geometry
and asymptotics of random structures". The work of the first named author was also supported by RFBR grant
10-01-00154a, as well as by Federal Focused Programme 2010-1.1-111-128-033.

\bibliographystyle{amsplain}

\vspace{1cm}

\parbox[t]{7cm}
{Mikhail Lifshits\\
St.Petersburg State University\\
Dept Math. Mech. \\
198504 Stary Peterhof, \\
Bibliotechnaya pl., 2\\
Russia\\
email: lifts@mail.rcom.ru}\hfill
\parbox[t]{6cm}
{Werner Linde\\
Friedrich--Schiller--Universit\"at Jena \\
Institut f\"ur Stochastik\\
Ernst--Abbe--Platz 2\\
07743 Jena\\
Germany\\
email: werner.linde@uni-jena.de}
\end{document}